\newtheorem{theorem}{Theorem}[section]
\theoremstyle{definition}
\theoremstyle{pro}
\newtheorem{definition}[theorem]{Definition}
\newtheorem{pro}[theorem]{Proposition}
\theoremstyle{remark}
\newtheorem{remark}[theorem]{Remark}
\numberwithin{equation}{section}
\begin{document}
\title{ Index theorem for $\mathbb{Z}/2$-harmonic spinors}

%    Information for first author
\author{Ryosuke Takahashi}
%    Address of record for the research reported here
\address{Institute of Mathematical Scences, Chinese University of Hong Kong,
Academic Building No.1, CUHK, Sha Tin, New Territories, HK}
%    Current address
\email{rtakahashi@ims.cuhk.edu.hk}
%    \thanks will become a 1st page footnote.

%    Information for second author

%    General info

\date{}

\keywords{}

\begin{abstract}
Let $M$ denote a compact 3-manifold. The author proved in [8] that there exists a Kuranishi structure for the moduli space of pairs consisting of a Riemannian metric on $M$ and a non-zero $\mathbb{Z}/2$-harmonic spinor subject to certain natural regularity assumptions. This paper proves that the virtual dimension of $\mathbb{Z}/2$-harmonic spinors for a generic metric is equal to zero. The paper also
computes the virtual dimension of certain $\mathbb{Z}/2$-harmonic spinors on 4-manifolds using an index theorem developed by Jochen Bruning and Robert Seeley and, independently, Fangyun Yang.
\end{abstract}
\maketitle
\section{Introduction and Main theorem}
Let $M$ be a closed oriented smooth 3-manifold. We define the following spaces
\begin{align*}
\mathcal{X}&=\{\mbox{ Riemannian metric defined on }M\mbox{ }\},\\
\mathcal{A}&=\{ \mbox{ }C^1\mbox{-embeddeding }S^1\rightarrow M\mbox{ }\},\\
\mathcal{Y}&=\mathcal{X}\times\mathcal{A}.
\end{align*}
For any $(g,\Sigma)\in \mathcal{Y}$, we choose a spinor bundle, $\mathcal{S}_{g,\Sigma}$, defined on $M-\Sigma$ which cannot be extended on $M$. The choice of this spinor bundle is not unique, but there are only finitely many choices. We fix a choice of $\mathcal{S}_{g,\Sigma}$ throughout the rest of this paper.\\

The author introduced in [8] the space $\mathfrak{M}$ which consists of $(g,\Sigma,\psi)\in \mathcal{Y}\times L^2_1(\mathcal{S}_{g,\Sigma})$ that satisfy the following
conditions: First, $\psi$ obeys the Dirac equation defined by the metric $g$ on $M -\Sigma$. Second, $|\psi|$
can be extended to the whole of $M$ as a H\"older continuous function. Third,
$\frac{|\psi|(p)}{\mbox{dist}(p,\Sigma)^{\frac{1}{2}}}$ is
bounded away from zero near $\Sigma$. The space $\mathfrak{M}$ is said here to be the moduli space of $\mathbb{Z}/2$-harmonic spinors on $M$.\\

The notion of a $\mathbb{Z}/2$-harmonic spinor was introduced by Taubes [1], [3] to
describe the behavior of certain non-convergent sequences of $PSL(2;\mathbb{C})$-connections
on 3-manifolds. This notion appeared again in Haydys and Walpuski's analysis of noncompact
sequences of solutions to multi-spinor generalizations of the Seiberg-Witten
equations on 3-manifolds [6]. Analogous $\mathbb{Z}/2$-harmonic spinors on 4-dimensional manifolds
appeared in the work by Taubes on the behavior of non-compact sequences of solutions to the
Kapustin-Witten equations [2], to the multi-spinor Seiberg-Witten equations on 4-manifolds
[4], and to the Vafa-Witten equations [5]. All of these equations have potentially important
applications. For example, Haydys and Walpuski [13] [14],  conjecture a fundamental relation between the multi-spinor Seiberg-Witten equations on 3-manifolds and the spaces of $G_2$-instantons on certain 7-dimensional manifolds (also see [7]). Meanwhile, Witten has conjectured [15] that spaces of solutions of the Kapustin-Witten equations can be used to compute the Jones’ polynomial for knots in $S^3$. All of these applications require some understanding of the behavior of non-convergent sequences of solutions to the relevant equations. What has been shown by Taubes and Haydys-Walpuski is that limits of
non-convergent sequences of solutions to the relevant equations can be defined (after a
renormalization) on the complement of a closed set in the ambient manifold of Hausdorff
dimension at most 2 that contains a dense, open $C^1$-submanifold. This bad set is, in all cases,
the zero locus of a $\mathbb{Z}/2$-harmonic spinor. This being the case, one must come to terms with
$\mathbb{Z}/2$-harmonic spinors and their zero locus. This paper
and [8] are the first steps to this end. Here, as in [8], some additional regularity is assumed −
that the zero locus of the $\mathbb{Z}/2$-harmonic spinor is everywhere a codimension 2-submanifold. Thus, it is assumed to be a union of embedded circles in case when $M$ has dimension 3, and
an embedded surface in the dimension 4 case.\\

Suppose now that $M$ is a closed, oriented 3-manifold. The main structure theorem for
 $\mathfrak{M}$ is as follows (see [8]):
\begin{theorem}
Let $p=(g, \Sigma, \psi)\in\mathfrak{M}$. There are
\begin{align*}
&a).\mbox{ }\mbox{two finite dimensional vector spaces } \mathbb{K}_0,\mathbb{K}_1,\mbox{ a ball } \mathbb{O}_0\subset\mathbb{K}_0 \mbox{ centered at }0,\\
&b).\mbox{  a set } \mathcal{B}\subset \mathcal{X}\mbox{ with } \mathcal{B} = p_1(\mathcal{N}) \mbox{ being the projection of }\mathcal{N},\mbox{ a neighborhood of }p,\\
&\mbox{ }\mbox{ }\mbox{ }\mbox{ from }\mathcal{Y}\mbox{ to
}\mathcal{X }, \mbox{ and}\\
&c).\mbox{ } f:\mathcal{B}\times \mathbb{O}_0\rightarrow \mathbb{K}_1\mbox{ a }C^1\mbox{-map in the sense of Frechet differentiation},
\end{align*}
such that $f^{-1}(0)$ is homeomorphic to a neighborhood of $p$ in $\mathfrak{M}$.
\end{theorem}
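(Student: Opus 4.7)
The plan is to realize $\mathfrak{M}$ near $p$ as the zero set of a Fredholm map between weighted Banach spaces and then carry out a Kuranishi reduction; the chief subtlety is that the singular locus $\Sigma$ varies together with $\psi$, so the domain of the Dirac equation is itself part of the unknown. First I would parametrize nearby embedded circles $\Sigma' \in \mathcal{A}$ by $C^{k,\alpha}$-sections $s$ of the normal bundle $N\Sigma$, and construct a smooth family of diffeomorphisms carrying $\Sigma'$ back to $\Sigma$, so that after pull-back every Dirac operator $D_{g',\Sigma'}$ acts on sections of a single bundle $\mathcal{S}\to M-\Sigma$ while depending $C^{1}$-smoothly on $(g',s)$.

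Next I would set up weighted Sobolev spaces adapted to the prescribed $\mathrm{dist}(\cdot,\Sigma)^{1/2}$ singular profile and verify that the linearization of
\begin{equation}
F : (g,s,\psi) \longmapsto D_{g,s}\psi
\end{equation}
at $p$ is a Fredholm operator, using the known indicial roots of the model operator on $\mathbb{R}^2 \times S^1$ that controls behavior near $\Sigma$. Let $\mathbb{K}_0$ and $\mathbb{K}_1$ denote its kernel and cokernel, split domain and target by $L^2$-orthogonal complements, and run the standard Kuranishi argument: for each metric $g$ in a neighborhood $\mathcal{B} \subset \mathcal{X}$ and each $\xi \in \mathbb{O}_0 \subset \mathbb{K}_0$, the implicit function theorem on the complement-component of $F=0$ yields a unique pair $(s(g,\xi), \eta(g,\xi))$. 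The residue $f(g,\xi) := \Pi_{\mathbb{K}_1} F\bigl(g, s(g,\xi), \xi+\eta(g,\xi)\bigr)$ is then a $C^1$-map $\mathcal{B}\times\mathbb{O}_0 \to \mathbb{K}_1$, and its zero locus records precisely which $(g,\xi)$ give rise to honest solutions $(g,\Sigma(g,\xi),\psi(g,\xi)) \in \mathfrak{M}$.

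The main obstacle is the weighted-space analysis, and in particular ensuring that the solutions produced by the implicit function theorem retain the two qualitative features defining $\mathfrak{M}$: the H\"older continuity of $|\psi|$ across $\Sigma$ and the lower bound $|\psi| \geq c\,\mathrm{dist}(\cdot,\Sigma)^{1/2}$. This forces one to match the leading coefficient in the asymptotic expansion of $\psi$ along $\Sigma$ against the normal perturbation $s$ of $\Sigma$; equivalently, one must show that the non-degeneracy of the leading coefficient is an open condition under the Kuranishi map. This asymptotic matching is the technical heart of the argument and is what distinguishes the problem from a standard Fredholm deformation theory on a fixed background.
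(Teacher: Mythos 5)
Note first that the paper does not prove Theorem 1.1 here; it is quoted from [8], and what this paper provides (Section 2.2) is a summary of the linearization framework of [8] that feeds into the index computation. Your outline --- parametrize nearby circles $\Sigma'$ by normal sections, pull back to a fixed bundle over $M-\Sigma$, show the linearization of $(g,s,\psi)\mapsto D_{g,s}\psi$ is Fredholm, then run the Kuranishi reduction --- is the right overall shape and matches the strategy of [8].

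The genuine gap is in your Fredholm step. You assert that Fredholmness follows ``using the known indicial roots of the model operator on $\mathbb{R}^2\times S^1$,'' but indicial-root analysis controls $D$ on a \emph{fixed} singular background, whereas the operator that must be Fredholm here is the joint linearization in $(\eta,\phi_0)$ from (2.1),
\[
\mathfrak{L}_p(\eta,\phi_0)=D\Bigl(\left(\begin{array}{c}\frac{d^+\eta}{\sqrt{z}}\\ \frac{d^-\bar\eta}{\sqrt{\bar z}}\end{array}\right)+\mathcal{R}_p(\eta)+\phi_0\Bigr),
\]
where $\eta\in L^2(S^1;\mathbb{C})$ encodes the motion of $\Sigma$. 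The singular $\eta$-term lies in $L^2$ but not $L^2_1$, and its $\Sigma$-trace $(d^+\eta,d^-\bar\eta)$ couples the unknown $\eta$ to the boundary data of $\phi_0$ exactly at the indicial root, so closed range for the joint operator is not automatic. The mechanism of [8], recalled in Section 2, is: (i) the $\Sigma$-trace map $B$ onto leading coefficients; (ii) its APS-type splitting $B=p^++p^-$ with $p^+$ compact and $p^-$ Fredholm (Proposition 2.2); and (iii) the operator $\mathcal{T}_{d^\pm}(a,b)=\bar d^-a-d^+\bar b$ that eliminates $\eta$, together with the nontrivial fact that $\mathcal{T}_{d^\pm}|_{Exp^-}$ is Fredholm (Proposition 2.6). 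Fredholmness of $\mathfrak{L}_p$ is deduced from Fredholmness of $\mathcal{T}_{d^\pm}\circ B$; it cannot be read off from the indicial roots of $D$ alone. Your proposal would need to supply this reduction or an equivalent one. Your instinct that ``asymptotic matching'' is the technical heart is pointing at the right phenomenon, but you place it as a post-IFT regularity concern, when in fact it is the obstacle to establishing the linear Fredholm theory that makes the IFT applicable in the first place.
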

This theorem implies the following: The subset in $\mathfrak{M}$ with a fixed metric component, say $g=g_0$, is a finite dimensional object. This fixed metric subset is denoted henceforth as $\mathfrak{M}_{g_0}$.
The virtual dimension of
$\mathfrak{M}_{g_0}$
is defined as follows: Let $\mathbb{K}_0$ and $\mathbb{K}_1$ denote the vector spaces
in the $g_0$ version of Theorem 1.1. The virtual dimension of
$\mathfrak{M}_{g_0}$
is $dim(\mathbb{K}_0) - dim(\mathbb{K}_1)$. The following is one of the main results of this paper.

\begin{theorem}
Let $(g_0, \Sigma_0, \psi_0)$ be a point in $\mathfrak{M}$. Then the corresponding vector spaces $\mathbb{K}_0$ and $\mathbb{K}_1$ from Theorem 1.1 have the same dimension. In particular, the virtual dimension of $\mathfrak{M}_{g_0}$ is zero.
\end{theorem}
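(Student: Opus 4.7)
The strategy is to identify the virtual dimension with the Fredholm index of the linearized $\mathbb{Z}/2$-harmonic spinor operator at $(g_0,\Sigma_0,\psi_0)$ with the metric held fixed, and then to show that this index vanishes because the operator is formally self-adjoint with respect to a non-degenerate pairing between its domain and target.

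First, using the construction from [8] that produced Theorem 1.1, I would make explicit the linear operator $L$ whose kernel is $\mathbb{K}_0$ and whose cokernel is $\mathbb{K}_1$. The natural domain of $L$ consists of pairs $(\nu,\eta)$ where $\nu\in\Gamma(N\Sigma_0)$ is a normal deformation of the embedded circle $\Sigma_0$ and $\eta$ is a variation of $\psi_0$ as a section of $\mathcal{S}_{g_0,\Sigma_0}$ satisfying the admissible asymptotic behavior along $\Sigma_0$; the target is an analogously constructed space. Then $L(\nu,\eta)=0$ encodes the infinitesimal $\mathbb{Z}/2$-harmonic spinor equations at $(g_0,\Sigma_0,\psi_0)$ with $g=g_0$ fixed. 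By Theorem 1.1 this $L$ is Fredholm in the appropriate weighted Sobolev category and $\mathrm{ind}(L)=\dim\mathbb{K}_0-\dim\mathbb{K}_1$, so the claim reduces to $\mathrm{ind}(L)=0$.

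Second, I would show that $L$ is formally self-adjoint. The diagonal spinor block is the singular Dirac operator on $\mathcal{S}_{g_0,\Sigma_0}$, which in dimension $3$ is formally self-adjoint on smooth compactly supported sections of $M-\Sigma_0$. The off-diagonal blocks coupling $\nu$ to $\eta$ arise because a normal displacement of $\Sigma_0$ shifts the model singular spinor along $\Sigma_0$; using the conormal expansion of admissible variations proven in [8], I would carry out integration by parts on $M-U_\varepsilon$ for a shrinking tubular neighborhood $U_\varepsilon$ of $\Sigma_0$ and identify the residual boundary integral as a symmetric pairing between the $\nu$-components and the leading singular coefficients of the $\eta$-components of the two test pairs. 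Matching this pairing with the diagonal spinor pairing then exhibits $L$ as formally self-adjoint with respect to an explicit non-degenerate sesquilinear form.

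Once formal self-adjointness is established, standard Fredholm theory yields $\dim\ker L=\dim\mathrm{coker}\,L$, hence $\mathrm{ind}(L)=0$ and $\dim\mathbb{K}_0=\dim\mathbb{K}_1$, which is the claim.

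The principal obstacle is the second step: the formal self-adjointness of $L$ is not a routine Dirac computation because the admissible deformation space couples a normal-bundle variable $\nu$ to the leading singular coefficient of $\eta$ at $\Sigma_0$. The boundary residue produced by integration by parts across $\partial U_\varepsilon$ does not vanish term-by-term; it has to be rewritten, using the precise conormal structure described in [8], into a symmetric bilinear expression in $(\nu,\eta)$ and $(\nu',\eta')$. Controlling these residues uniformly as $\varepsilon\to 0$, and identifying them with the correct dual pairing so that symmetry is transparent, is the technical crux of the argument.
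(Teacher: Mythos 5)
Your proposal takes a genuinely different route from the paper, and in its present form it has a gap.

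The paper does not prove formal self-adjointness of the full linearization. Instead it writes $\mathbb{K}_0 = \ker(\mathcal{T}_{d^\pm}\circ B)$ and $\mathbb{K}_1 = \mathrm{coker}(\mathcal{T}_{d^\pm}\circ B)\oplus\ker(D|_{L^2_1})$, and then computes $\mathrm{ind}(F_p)$ by factoring it into \emph{two separate} index computations: (i) $\mathrm{ind}(\mathcal{T}_{d^\pm}|_{Exp^-}) = 0$, a Toeplitz-type index result quoted from [8] (Proposition~2.6, which relies on the nondegeneracy condition $|d^+|^2+|d^-|^2>0$ but is otherwise a genuine symbol/winding computation, not a Green's-formula consequence); and (ii) $\mathrm{ind}(p^-)$, the index of the Calder\'on-type projection from $\ker(D|_{L^2})$ onto $Exp^-$. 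Only the second piece is controlled by the kind of integration-by-parts symmetry you invoke: Proposition~3.2, that $B(\ker(D|_{L^2}))$ is Lagrangian with respect to the boundary form $\mathscr{B}(X,Y)=\int\langle X,e_0 Y\rangle$, is exactly the residue-at-$\partial U_\varepsilon$ argument you outline, and it gives the isomorphism $\mathrm{coker}(p^-)\cong\ker(p^-)/\ker(D|_{L^2_1})$, hence $\mathrm{ind}(p^-)=\dim\ker(D|_{L^2_1})$.

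The gap in your plan is precisely where you yourself flag it: the claimed formal self-adjointness of the full coupled operator $L$ acting on pairs $(\nu,\eta)$. Green's formula gives you only that the Dirac piece is skew-symmetric modulo the boundary pairing, i.e.\ the Lagrangian property of the boundary trace; it does \emph{not} by itself make the coupling to $\nu$ symmetric, because that coupling is governed by the symbol $(\bar d^-,-d^+)$ through the operator $\mathcal{T}_{d^\pm}$, and the vanishing of the index of that operator (restricted to $Exp^-$) is an independent fact with its own proof. If your self-adjointness claim were correct it would subsume Proposition~2.6, but nothing in your sketch indicates how the Toeplitz index vanishing would be forced; you would at a minimum need to show that the winding of the symbol built from $d^\pm$ is zero, which requires input beyond the boundary-term symmetry. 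So either supply a proof that the coupled boundary pairing is symmetric in a way that also forces the Toeplitz index to vanish, or adopt the paper's factorization $\mathrm{ind}(F_p)=\mathrm{ind}(\mathcal{T}_{d^\pm}|_{Exp^-})+\mathrm{ind}(p^-)$ and handle the two pieces separately. As written, the proposal names the right tools (integration by parts, conormal expansion, boundary pairing) but omits the Toeplitz-index ingredient and therefore does not close.
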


\begin{remark}Note that if $(g, \Sigma, \psi)$ is in $\mathfrak{M}$, then so is $(g, \Sigma, c\psi)$ with $c$ being any non-zero complex number. This in turn implies that the set of $(g, \Sigma, c\psi)$ from $\mathfrak{M}$ with $\psi$ having $L^2$ norm equal to 1 has formal dimension -1. This last observation supports a conjecture made by Haydys and Walpuski [13] with regards to the multi-spinor Seiberg-Witten equations.
\end{remark}

The proof of Theorem 1.2 occupies the first part of this paper. The second part of this
paper considers a generalization of Theorem 1.2 to the case when $M$ is a closed, oriented
manifold of dimension 4. This part considers an analog of $\mathfrak{M}$, $\mathfrak{M}_{T^2}$, consisting of triples $(g, \Sigma, \psi)$
where $g$ is a Riemannian metric, $\Sigma$ is a $C^1$ embedded 2-dimensional torus in $M$ with trivial
normal bundle and $\psi$ is a harmonic, self-dual spinor on the complement of $\Sigma$ (defined by a
Spin structure on the complement of $\Sigma$) whose norm extends across $\Sigma$ as a H\"older continuous
function vanishing on $\Sigma$ and obeying
$\frac{|\psi|(p)}{\mbox{dist}(p,\Sigma)^{\frac{1}{2}}}$ on a neighborhood of $\Sigma$. Note that in the
context of [4], [2] and [5], there is no a priori reason why the zero locus of $|\psi|$
should be a torus (and with trivial normal bundle too) even in the event that it is a $C^1$ submanifold. This constraint on the topology is an extra condition that is imposed here. In any event, even with the torus restriction, the analog of Theorem 1.1 for this 4-dimensional version of $M$ has yet to be proved. However, assuming that Theorem 1.1 holds for a given triple $(g_0, \Sigma_0, \psi_0)$ as just described, then the difference between the dimensions of the associated spaces $\mathbb{K}_0$ and $\mathbb{K}_1$ can be viewed as a virtual dimension for $\mathfrak{M}_{T^2}$ near $(g_0, \Sigma_0, \psi_0)$.\\

 Even though we don't have the 4-dimensional version of Theorem 1.1 and Fredholm property for the linearization of $\mathfrak{M}_{T^2}$ at $p=(g_0, \Sigma_0, \psi_0)$, denoted by $F_{T^2,p}: \mathbb{K}_0\rightarrow \mathbb{K}_1$ (defined in (4.6), section 4.2), the corresponding index can still be obtained based on the 3-dimensional linearization argument. $F_{T^2,p}$ is determined by the leading term of $\psi_0$, denoted by $d^{\pm}$. In the proof of Theorem 1.1 [8], it is true that the Fredholmness holds for any $d^{\pm}$ satisfying $|d^+|^2+|d^-|^2>0$. By assuming the same Fredholm property for $F_{T^2,p}$, i.e., $F_{T^2,p}$ is Fredholm for any $d^{\pm}$ satisfying $|d^+|^2+|d^-|^2>0$, we will be able to prove a four-dimensional
version of Theorem 1.2. This will be Theorem 4.4 in Section 4.

\section{Preliminary: Linearization of $\mathfrak{M}$}
\subsection{Some background properties and notations}
In this subsection, we will introduce some notations and propositions that will be needed in the proof of  Theorem 1.2. Then, we will also briefly go through the linearization argument of $\mathfrak{M}$ which appears in [8]. After having explained this argument, we can then define $\mathbb{K}_1$ and $\mathbb{K}_0$ precisely. We will omit all proofs of these propositions because they are all in [8].\\

First of all, for any $(g,\Sigma)\in \mathcal{Y}$, we can parametrize a small tubular neighborhood of $\Sigma$, $N$, by $(t,z)\in S^1\times\mathbb{D}_R$ where $\mathbb{D}_R$ is a complex disc of radius $R>0$ in $\mathbb{C}$. In addition, a $\mathbb{Z}/2$ spinor bundle can be written as $\mathcal{S}_{g,\Sigma}=\mathcal{S}_g\otimes \mathcal{I}_{\Sigma}$ where $\mathcal{S}_g$ is the spinor bundle defined on $M$ and $\mathcal{I}_{\Sigma}$ is a non-extendable real line bundle over $M-\Sigma$. On $N-\Sigma \simeq T^2\times (0,R)$, we have $\mathcal{S}_{g,\Sigma}=(\mathcal{S}_{S^1}\otimes\mathcal{I}_{\Sigma}) \oplus (\mathcal{S}_{S^1}\otimes\mathcal{I}_{\Sigma})$ where $\mathcal{S}_{S^1}\simeq \pi^*(\mathcal{S})$ is the pull-back bundle of the spinor bundle from the map $\pi: N-\Sigma \rightarrow S^1$ (by sending $(t,z)$ to $t$). The detail of this argument can be found in Section 2.1 of [9] or Appendix B in [8].\\

The following proposition can be found in Section 3.1 of [8].
\begin{pro} \ \\
$\mbox{a).}$ $ L^2(M-\Sigma; \mathcal{S}_{g,\Sigma})=ker(D|_{L^2})\oplus range(D|_{L^2_1})$,\\
$\mbox{b).}$ For any $v\in ker(D|_{L^2})$, we have
\begin{align*}
v=\left(\begin{array}{c}
\frac{c^+(t)}{\sqrt{z}}\\
\frac{c^-(t)}{\sqrt{\bar{z}}}
\end{array}\right)+v_{R},
\end{align*}
$\mbox{ }\mbox{ }\mbox{ }$    where $v_{R}=O(|z|^{\alpha})$ for some $\alpha > 0$ and $c^+,c^-\in C^{\infty}(N-\Sigma, \mathcal{S}_{S^1}\otimes\mathcal{I}_{\Sigma})$.\\
$\mbox{c).}$ For any $u\in ker(D|_{L^2_1})$, we have
\begin{align*}
u=\left(\begin{array}{c}
 d^+(t)\sqrt{z}\\
 d^-(t)\sqrt{\bar{z}}
\end{array}\right)+u_R.
\end{align*}
$\mbox{ }\mbox{ }\mbox{ }$    where $u_{R}=O(|z|^{\beta})$ for some $\beta >\frac{1}{2}$  and $d^+,d^-\in C^{\infty}(N-\Sigma, \mathcal{S}_{S^1}\otimes\mathcal{I}_{\Sigma})$.\\
$\mbox{ }\mbox{ }\mbox{ }$     Moreover, $u_R$ is in $L^2_2(N-\Sigma;\mathcal{S}_{g,\Sigma})$.
\end{pro}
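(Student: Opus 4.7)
Since the proposition is cited to [8], let me sketch the plan I would follow. The strategy has two ingredients: an orthogonal decomposition based on formal self-adjointness, and an indicial-root analysis for $D$ near $\Sigma$.

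For part (a), I would use that $D$ is elliptic and formally self-adjoint on $M-\Sigma$. Once $D:L^2_1(M-\Sigma;\mathcal{S}_{g,\Sigma})\to L^2(M-\Sigma;\mathcal{S}_{g,\Sigma})$ is shown to have closed range, the Hilbert-space identity $L^2=\ker(D^{*})\oplus\overline{\mathrm{range}(D)}$ produces the decomposition, since the $L^{2}$ kernel of the adjoint coincides with $\ker(D|_{L^{2}})$. The closed-range claim itself rests on the local model analysis required for (b) and (c).

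For parts (b) and (c), I would work in the tubular coordinates $(t,z)\in S^{1}\times\mathbb{D}_{R}$ using the splitting $\mathcal{S}_{g,\Sigma}=(\mathcal{S}_{S^{1}}\otimes\mathcal{I}_{\Sigma})\oplus(\mathcal{S}_{S^{1}}\otimes\mathcal{I}_{\Sigma})$. In an adapted frame, $D$ agrees with the flat product model
\begin{equation*}
D_{0}=\begin{pmatrix}\partial_{t} & 2\partial_{z}\\ 2\partial_{\bar z} & -\partial_{t}\end{pmatrix}
\end{equation*}
up to lower-order terms that vanish at $\Sigma$. The $\mathbb{Z}/2$-monodromy carried by $\mathcal{I}_{\Sigma}$ forces sections to be antiperiodic under $z\mapsto e^{2\pi i}z$, so the admissible angular modes in the $z$-direction are $z^{k+1/2}$ and $\bar z^{k+1/2}$ for $k\in\mathbb{Z}$. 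Expanding $Du=0$ in a Fourier series in $t$ and in these half-integer angular modes reduces the equation to a decoupled family of regular-singular radial ODEs in $r=|z|$ whose indicial exponents are the half-integers, and the leading surviving mode is then selected by the integrability condition. In (b), $L^{2}$-integrability allows the critical roots $\pm 1/2$ and the most singular modes produce exactly $c^{+}(t)/\sqrt{z}$ and $c^{-}(t)/\sqrt{\bar z}$, with $v_{R}$ controlled by the next indicial root. In (c), $L^{2}_{1}$-integrability forbids $z^{-1/2}$ and $\bar z^{-1/2}$ because their first derivatives fail to be $L^{2}$, so the leading modes become $\sqrt{z}$ and $\sqrt{\bar z}$ with $\beta>1/2$. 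Smoothness of $c^{\pm}$, $d^{\pm}$ and the $L^{2}_{2}$-regularity of $u_{R}$ then follow from standard elliptic bootstrap on $N-\Sigma$ applied after subtracting the explicit leading terms.

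The principal technical obstacle is to ensure that the perturbation of $D$ away from $D_{0}$, arising from the curvature of $g$ and the second fundamental form of $\Sigma$, does not disturb the indicial-root structure. The natural way to handle this is to work in a weighted Sobolev or edge-type calculus adapted to the circle singularity, so that these perturbations are genuinely subordinate to $D_{0}$ and the half-integer indicial exponents persist uniformly; the three assertions of the proposition are then uniform consequences of this local model.
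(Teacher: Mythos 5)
Since the paper itself defers the proof of Proposition~2.1 to Section~3.1 of~[8] and explicitly omits it, there is no argument in this document to compare against line by line. What can be checked is consistency with the analysis the paper does carry out: the four-dimensional analogue in Section~4.2 (the Fourier expansion, the radial ODE system (4.2)--(4.3), the modified Bessel solutions (4.4), and the resulting leading-order statements in Proposition~4.1) performs exactly the separation-of-variables and indicial-root extraction you describe, and the three-dimensional local model (2.4) is the starting point for the same computation one dimension lower. So your overall plan --- half-integer angular modes forced by $\mathcal{I}_\Sigma$, indicial exponents read off a regular-singular radial ODE, $L^2$ admitting the $r^{-1/2}$ mode while $L^2_1$ rules it out, and elliptic bootstrap away from the leading terms for the regularity of $c^\pm$, $d^\pm$ and $u_R$ --- is evidently the approach underlying the cited result.

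Two corrections are needed, one cosmetic and one substantive. The cosmetic one: your local model $D_0$ is not the paper's (2.4). There the diagonal is $\pm i\partial_t$ and the lower-left entry is $-\partial_{\bar z}$, so that $D$ is formally self-adjoint; with your $\sigma_3\partial_t$ diagonal and $+\partial_{\bar z}$ below, $D_0$ is formally \emph{anti}-self-adjoint (the Clifford generator $e_0$ squares to $+1$ rather than $-1$), which contradicts the self-adjointness you invoke in part~(a). The indicial roots are unaffected, so (b) and (c) survive, but the model should be fixed. The substantive gap is in part~(a): everything there rides on the closed-range claim for $D:L^2_1\to L^2$, and saying it ``rests on the local model analysis required for (b) and (c)'' is too thin. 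The pointwise asymptotic expansions in (b) and (c) do not by themselves produce the uniform a priori estimate $\|u\|_{L^2_1}\leq C(\|Du\|_{L^2}+\|u\|_{L^2})$ up to $\Sigma$ which closed range (and hence the Hodge-type decomposition) requires; establishing that estimate, for instance via the Br\"uning--Seeley or edge-calculus framework you gesture at, is the genuine analytic content of part~(a) and has to be spelled out rather than deferred.
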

Suppose that $\mathcal{S}_{S^1}|_{N}$ is a trivial complex line bundle. Using the notation from Proposition
2.1 b), we define the map
\begin{align*}
B:ker(D|_{L^2})\rightarrow L^2(S^1;\mathbb{C}^2)
\end{align*}
by sending $v$ to $(c^+,c^-)$. Now, $L^2(S^1;\mathbb{C}^2)$ can be decomposed in the following way:
\begin{align*}
Exp^+=\bigg\{\left(\begin{array}{c}
\sum_{l\in \mathbb{Z}} p_l e^{ilt}\\
\sum_{l\in \mathbb{Z}} -sign(l)ip_l e^{ilt}
\end{array}\right)\bigg| (p_l)\in l^2\bigg\},\\
Exp^-=\bigg\{\left(\begin{array}{c}
\sum_{l\in \mathbb{Z}} p_l e^{ilt}\\
\sum_{l\in \mathbb{Z}} sign(l)ip_l e^{ilt}
\end{array}\right)\bigg| (p_l)\in l^2\bigg\}.
\end{align*}
Then $L^2(S^1;\mathbb{C}^2)=Exp^+\oplus Exp^-$.\\

\begin{pro}$($\cite[Proposition 6.1]{H}$)$
Let $\pi^{\pm}$ be the projections from $L^2(S^1;\mathbb{C}^2)$ to $Exp^{\pm}$ and $p^{\pm}:=\pi^{\pm}\circ B$ which form the following diagram
\begin{diagram}
      &    &     Exp^+  \\
      &\ruTo^{p^+}    &    \uTo \mbox{ }\pi^+   \\
ker(D|_{L^2(M-\Sigma;\mathcal{S}_{g,\Sigma}})&    \rTo^{\mbox{ } \mbox{ } \mbox{ }B\mbox{ } \mbox{ } \mbox{ }}        &L^2(S^1;\mathbb{C}^2) \\
     &\rdTo_{p^-}     &    \dTo \mbox{ }\pi^-   \\
     &    &    Exp^-,
\end{diagram}
then $p^+$ is a compact operator and $p^-$ is a Fredholm operator.
\end{pro}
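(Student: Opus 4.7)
The plan is to reduce Proposition 2.2 to a spectral-boundary analysis on the tubular neighborhood $N$. Introduce polar coordinates $(t, r, \theta) \in S^1 \times [0,R) \times S^1$ with $z = re^{i\theta}$ and the logarithmic coordinate $s = -\log r$; in these coordinates the Dirac equation becomes, to leading order, a translation-invariant first-order ODE $\partial_s u = A_0 u + (\text{lower order})$ for a self-adjoint cross-section operator $A_0$ on $L^2(S^1_t \times S^1_\theta;\mathbb{C}^2)$ with discrete spectrum. The half-integer monodromy of $\mathcal{S}_{g,\Sigma}$ around $\Sigma$ shifts the spectrum of $A_0$ off the integers, so that the critical exponents $r^{\pm 1/2}$ of Proposition 2.1 are precisely the $\pm 1/2$-eigenvalues of $A_0$. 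A direct Fourier-mode calculation in $t$ and $\theta$ matches the corresponding eigenspaces with the subspaces $Exp^{\pm}$ in the excerpt; under this identification $Exp^-$ is the spectral half-line whose associated ODE solutions decay into the bulk of $M-\Sigma$, and $Exp^+$ is the growing half-line.

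To prove $p^-$ is Fredholm I would build a parametrix in two pieces. Using Proposition 2.1(a) together with the model semigroup generated by the negative spectral part of $A_0$, one produces for each $\phi \in Exp^-$ a candidate $v_\phi \in \ker(D|_{L^2})$ with $B(v_\phi) = \phi$ modulo a smoothing remainder; this yields an approximate right inverse and shows the range of $p^-$ is closed of finite codimension in $Exp^-$. In the other direction, an element of $\ker(p^-)$ has vanishing $Exp^-$ component of its $r^{-1/2}$ coefficient, and by elliptic regularity away from $\Sigma$, the expansion of Proposition 2.1(b), and unique continuation from $\Sigma$, it must lie in a finite-dimensional subspace. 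Combined these give Fredholmness of $p^-$.

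For $p^+$, the key observation is that the $Exp^+$ component of any kernel element's singular expansion is obtained by pairing the interior part of the spinor against the finite list of model modes generated by the positive spectral part of $A_0$. This pairing factors through restriction to a cross-sectional slice $\{r = r_0\}$ with $r_0 > 0$, hence through an inclusion $H^s(S^1;\mathbb{C}^2) \hookrightarrow L^2(S^1;\mathbb{C}^2)$ for $s$ arbitrarily large; Rellich's theorem then gives compactness. The main obstacle is verifying that the lower-order corrections to $A_0$ coming from curvature, connection, and metric perturbation terms do not move eigenvalues across the critical value $\pm 1/2$ and do not generate logarithmic resonances, which would spoil the clean $r^{\pm 1/2}$ dichotomy and invalidate both the parametrix construction for $p^-$ and the smoothing structure underlying the compactness of $p^+$.
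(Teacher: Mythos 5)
The paper does not actually prove this proposition here (it is cited from Proposition 6.1 of [H]), but its four-dimensional analog, Proposition 4.2, is proved in Section 4.3 by what is presumably the same method, so I compare against that. Your cylindrical-end framing via $s = -\log r$ and spectral decomposition of the cross-section operator $A_0$ is a legitimate recast of what the paper does with explicit Fourier modes and modified Bessel functions, and your parametrix sketch for the closed range of $p^-$ matches the paper's construction (approximate solutions from cut-off model modes, corrected via Proposition 2.1(a)). Your worry about indicial roots being perturbed off $\pm\tfrac{1}{2}$ is a real issue in general, but is sidestepped here: Remark 2.8 observes that the index is locally constant, so one may assume the metric is Euclidean near $\Sigma$, where the model ODE is exact and no logarithmic resonances arise.

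Two steps have concrete gaps. First, for compactness of $p^+$: the space $Exp^+$ is infinite-dimensional (one growing mode for each $l\in\mathbb{Z}$ or $\mathbb{Z}+\tfrac{1}{2}$), so it is emphatically not a ``finite list of model modes,'' and the argument cannot rest on that finiteness. The underlying idea you want --- that $\pi^+$ applied to the slice data at $r_0>0$ over-determines the $r=0$ coefficient by the exponential factor $e^{|l|r_0}$, and the slice is controlled by interior elliptic regularity --- is correct, and is what the paper encodes through the a priori estimate $\sum_{l,m}\sqrt{l^2+m^2}\,|\hat u^{+}_{l,m}|^2 \le C\|\mathfrak{u}\|^2_{L^2}$, exhibiting $p^+$ as bounded into a weighted $\ell^2$ that embeds compactly into $\ell^2$. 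You should replace the ``finite list'' claim with this weighted bound (or with the exponential damping factor $e^{-|l|r_0}$, which is already a compact multiplier). Second, for $\ker(p^-)$: unique continuation from $\Sigma$ rules out infinite-order vanishing, which is not the conclusion needed and does not by itself bound the dimension. The paper instead derives, on $\ker(p^-)$, the estimate $\|\mathfrak{u}\|_{L^2_1}\le C\|\mathfrak{u}\|_{L^2}$ from the Lichnerowicz--Weizenb\"ock formula (the boundary term at $\partial N_r$ drops because the $Exp^-$ coefficients vanish); the unit $L^2$-ball of $\ker(p^-)$ is then $L^2$-precompact, forcing $\ker(p^-)$ to be finite-dimensional. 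That compactness estimate is the step your ``unique continuation'' appeal should be replaced by.
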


\begin{remark}
Whenever $\mathcal{S}_{S^1}|_{N}$ is a trivial or nontrivial complex line bundle, we always have $B$ mapping to $L^2(S^1;\mathcal{S}_{\Sigma}\oplus\mathcal{S}_{\Sigma})$ for $\mathcal{S}_{\Sigma}$ being a spinor bundle on $\Sigma$. The same argument works with $l\in \mathbb{Z}+\frac{1}{2}$ when it is nontrivial. So we only need to focus on one case.
\end{remark}

\subsection{Linearization of $\mathfrak{M}$}Let $p=(g_0,\Sigma_0,\psi_0)\in \mathfrak{M}$. We choose $(g_s,\Sigma_s,\psi_s)$ to be a $C^1$-curve passing through this point in $\mathcal{Y}\times L^2_1(\mathcal{S}_{g,\Sigma})$ with $s\in (-\varepsilon,\varepsilon)$. To be more specific, firstly, one can parametrize the tubular neighbourhood of $\Sigma_0$ by $\{(t, z)|t\in [0,2\pi] \mbox{ and }z\in\mathbb{C}, |z|<R\}$ for some small $R$. We call this neighborhood $N$. Under this coordinate, we write
\begin{align*}
\Sigma_s&=\{(t, s\eta(t)+O(s^2))\},\\
\psi_s&=\psi_0(t, z-s\eta+O(s^2))+s\phi_s
\end{align*}
for some $C^1$-map $\eta:S^1\rightarrow \mathbb{C}$ with $\|\eta\|_{C^1}\leq 1$ and $\phi_s=O_{L^2_1}(1)$. Here we use the notation $O_{L^2_1}(1)$ to denote a one-parameter section $\rho_s$ satisfying $\|\rho_s\|_{L^2_1}\leq C$ for some constant $C>0$. We also choose $\varepsilon$ small enough such that $\Sigma_s\subset N$ for all $s\in (-\varepsilon,\varepsilon)$.\\

In addition, the metric perturbation can be written as
\begin{align*}
g_s=g_0+s\delta_s
\end{align*}
which satisfies $\delta_s=0$ on the tubular neighborhood $N$.\footnote{This is part of assumption we used in [8]. We assumed the metric perturbation and the perturbation of $\Sigma$ will not interfere with each other when $s$ small.} Let $D_s$ be the Dirac operator defined on $M-\Sigma_s$ with respect to $g_s$ and $D=D_0$, then we have $D_s=D$ on $N-(\Sigma_s\cup \Sigma_0)$. So we can write $D_s=D+sT_s$ for some first order differential operator $T_s$ defined on $M-\Sigma_s$ supported on $M-N$.\\

Since $\psi_0$ vanishes on $\Sigma_0$ and satisfies the Dirac equation, we can write down the general solution for it of the form
\begin{align*}
\psi_0=\left(\begin{array}{c}
d^+(t)\sqrt{z}\\
d^-(t)\sqrt{\bar{z}}
\end{array}\right)+O(|z|^\alpha)
\end{align*}
for some $\alpha>\frac{1}{2}$ by Proposition 2.1. So
\begin{align*}
\psi_s=\left(\begin{array}{c}
d^+(t)\sqrt{z-s\eta+O(s^2)}\\
d^-(t)\sqrt{\bar{z}-s\bar{\eta}+O(s^2)}
\end{array}\right)+O(|z-s\eta+O(s^2)|^\alpha)+s\phi_s.
\end{align*}

Now, for any $(\delta_0, \eta, \phi_0)$ defined as above, we have map
\begin{align*}
\mathfrak{L}_p(\delta_0, \eta, \phi_0):&=\frac{d}{ds}(D_s\psi_s)\Big{|}_{s=0}=T_0(\psi_0)+D\Big{(}\frac{d}{d s}\psi_s\Big{)}\Big{|}_{s=0}\\
&=T_0(\psi_0)+D\bigg(\left(\begin{array}{c}
\frac{d^+(t)\eta}{\sqrt{z}}\nonumber\\
\frac{d^-(t)\bar{\eta}}{\sqrt{\bar{z}}}
\end{array}\right)+\mathcal{R}_p(\eta)+\phi_0\bigg).
\end{align*}
Here $\mathcal{R}_p(\eta)$ is an element determined by $p=(g_0,\Sigma_0,\psi_0)$ and $\eta$. We notice that $\mathcal{R}_p(\eta)=O_{L^2_1}(1)$. Also notice that $\delta_0$ corresponds to the metric perturbation ($p_1(\mathcal{N})$ part) in Theorem 1.1. Since we are now interested in the space $\mathfrak{M}_{g_0}$, we can take $\delta_0=0$ here. Namely, $T_0(\psi_0)=0$. So we define the linearization map
\begin{align}
\mathfrak{L}_{p}:\{\eta:S^1\rightarrow \mathbb{C};\|\eta\|_{C^1}\leq 1\}\times L^2_1(M-\Sigma; \mathcal{S}_{g,\Sigma})\rightarrow L^2 (M-\Sigma; \mathcal{S}_{g,\Sigma})\nonumber\\
\mathfrak{L}_{p}(\eta,\phi):= D\bigg(\left(\begin{array}{c}
\frac{d^+(t)\eta}{\sqrt{z}}\\
\frac{d^-(t)\bar{\eta}}{\sqrt{\bar{z}}}
\end{array}\right)+\mathcal{R}_p(\eta)+\phi_0\bigg)
\end{align}
with $\mathcal{R}_p(\eta)=O_{L^2_1}(1)$.\\

Here we study $ker(\mathfrak{L}_{p})$. To satisfy $\mathfrak{L}_{p}(\eta,\phi_0)=0$, we need
\begin{align}
D\bigg(\left(\begin{array}{c}
\frac{d^+(t)\eta}{\sqrt{z}}\\
\frac{d^-(t)\bar{\eta}}{\sqrt{\bar{z}}}
\end{array}\right)+\mathcal{R}_p(\eta)+\phi_0\bigg)=0.
\end{align}

To study the condition (2.2), we use the map $B: ker(D|_{L^2})\rightarrow L^2(S^1;\mathbb{C}^2)$ sending a $L^2$-harmonic spinor to its leading coefficient. In our case, we have
\begin{align*}
B\bigg(\left(\begin{array}{c}
\frac{d^+(t)\eta}{\sqrt{z}}\\
\frac{d^-(t)\bar{\eta}}{\sqrt{\bar{z}}}
\end{array}\right)+\mathcal{R}_p(\eta)+\phi_0\bigg)=(d^+\eta,d^-\bar{\eta}).
\end{align*}
Therefore, to fulfill the equation (2.2), we need
\begin{align}
(d^+\eta,d^-\bar{\eta})\in range(B).
\end{align}

The condition (2.3) still involves the unknown $\eta$, so we define the following map.
\begin{definition}
Let $\psi$ be a $\mathbb{Z}/2$-harmonic spinor. Denoted by $d^{\pm}$ its leading
coefficient as in Proposition 2.1 c). Define
\begin{align*}
&\mathcal{T}_{d^{\pm}}:L^2(S^1;\mathbb{C}^2) \rightarrow L^2(S^1;\mathbb{C})\mbox{ by}\\
&\mathcal{T}_{d^{\pm}}(a,b)=\bar{d}^-a-d^+\bar{b}.
\end{align*}
\end{definition}

Composing $\mathcal{T}_{d^{\pm}}$ with $B$, then we have the following sequence
\begin{align*}
ker(D|_{L^2})\rTo^{\mbox{ }B\mbox{ }} L^2(S^1;\mathbb{C}^2)\rTo^{\mbox{ }\mathcal{T}_{d^{\pm}}\mbox{ }} L^2(S^1;\mathbb{C}).
\end{align*}
Clearly we have $\mathcal{T}_{d^{\pm}}((d^+\eta,d^-\bar{\eta}))=0$. Therefore, we have the following map from $ker(\mathfrak{L}_p)$ to $ker(\mathcal{T}_{d^{\pm}}\circ B)$,
\begin{align*}
 (\eta, \phi_0)\in ker(\mathfrak{L}_p)\longrightarrow u=\left(\begin{array}{c}
\frac{d^+(t)\eta}{\sqrt{z}}\\
\frac{d^-(t)\bar{\eta}}{\sqrt{\bar{z}}}
\end{array}\right)+\mathcal{R}_p(\eta)+\phi_0.
\end{align*}
Here we prove that this map is a bijection by writing down its inverse. For any $u\in ker(\mathcal{T}_{d^{\pm}}\circ B)$, we can write $B(u)=(u^+,u^-)$. So we can solve $\eta=\frac{u^+}{d^+}=\frac{\bar{u}^-}{d^-}$. This is well-defined because $B(u)$ is in $ker(\mathcal{T}_{d^{\pm}})$. With this $\eta$, we can solve $\phi_0$:
\begin{align*}
\phi_0:=u-\left(\begin{array}{c}
\frac{d^+(t)\eta}{\sqrt{z}}\\
\frac{d^-(t)\bar{\eta}}{\sqrt{\bar{z}}}
\end{array}\right)+\mathcal{R}_p(\eta).
\end{align*}
Therefore, there is an inverse map from $ker(\mathcal{T}_{d^{\pm}}\circ B)$ to $ker(\mathfrak{L}_p)$. So we define $\mathbb{K}_0=ker(\mathcal{T}_{d^{\pm}}\circ B)$.\\

In addition, we can prove the following proposition. 
\begin{pro}
$coker(\mathfrak{L}_p)$ is isomorphic to $coker(\mathcal{T}_{d^{\pm}}\circ B)\oplus(ker(D|_{L^2_1}))$.
\end{pro}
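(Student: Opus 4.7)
The plan is to apply Proposition 2.1a) to reduce $coker(\mathfrak{L}_p)$ to a quotient of $ker(D|_{L^2})$, and then to match this quotient with $coker(\mathcal{T}_{d^{\pm}}\circ B) \oplus ker(D|_{L^2_1})$ using the boundary-integral pairing obtained from integration by parts near $\Sigma$.

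\textbf{Step 1.} By Proposition 2.1a), $L^2 = ker(D|_{L^2}) \oplus range(D|_{L^2_1})$. Since $\mathfrak{L}_p(0,\phi_0) = D(\mathcal{R}_p(0)+\phi_0)$ and $\phi_0$ ranges over $L^2_1$, the image $\mathfrak{L}_p(\{0\}\times L^2_1)$ equals $range(D|_{L^2_1})$, so $range(\mathfrak{L}_p) \supseteq range(D|_{L^2_1})$. Writing $\pi_{ker}$ for the orthogonal projection onto $ker(D|_{L^2})$, this gives
\[
coker(\mathfrak{L}_p) \cong ker(D|_{L^2})\big/V, \qquad V := \pi_{ker}\bigl(\{Du_\eta : \eta\}\bigr),
\]
where $u_\eta$ denotes the argument of $D$ in (2.1).

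\textbf{Step 2.} Split $ker(D|_{L^2}) = ker(D|_{L^2_1}) \oplus W$ orthogonally in $L^2$. For $w\in ker(D|_{L^2})$, integration by parts on $M$ minus an $\epsilon$-tube around $\Sigma$ writes $\langle Du_\eta, w\rangle_{L^2}$ as a boundary integral on $\{|z|=\epsilon\}$, and Proposition 2.1b)--c) shows that its $\epsilon\to 0$ limit depends only on the leading coefficients $B(u_\eta)=(d^+\eta,d^-\bar\eta)$ and $B(w)=(c^+,c^-)$. When $w\in ker(D|_{L^2_1})$, Proposition 2.1c) forces $B(w)=0$, so the pairing vanishes for every $\eta$. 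Hence $v_\eta := \pi_{ker}(Du_\eta) \perp ker(D|_{L^2_1})$, i.e.\ $V\subseteq W$, and
\[
coker(\mathfrak{L}_p) \cong ker(D|_{L^2_1}) \oplus W\big/V.
\]

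\textbf{Step 3.} Identify $W/V$ with $coker(\mathcal{T}_{d^{\pm}}\circ B)$. Explicit evaluation of the boundary integral in Step 2 expresses $\langle v_\eta, w\rangle_{L^2}$ for $v_\eta\in V$ and $w\in W$ as an $L^2(S^1)$-pairing of $\eta$ against $\mathcal{T}_{d^{\pm}}(B(w))$. Through the injection $B|_W:W\hookrightarrow L^2(S^1;\mathbb{C}^2)$ (injective because $ker(B)=ker(D|_{L^2_1})$), this identifies $V$ as the annihilator within $W$ of $B^{-1}(ker(\mathcal{T}_{d^{\pm}}))\cap W$. Proposition 2.2 ensures that $B$, and hence $\mathcal{T}_{d^{\pm}}\circ B$, has closed range, so Riesz duality upgrades the annihilator description to the isomorphism $W/V \cong L^2(S^1;\mathbb{C})/range(\mathcal{T}_{d^{\pm}}\circ B) = coker(\mathcal{T}_{d^{\pm}}\circ B)$.

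The main obstacle is Step 3: converting the boundary-integral duality into a clean isomorphism of quotients. Matching the $L^2$-metric on $W$ with the $L^2(S^1;\mathbb{C}^2)$-metric through the non-isometric injection $B|_W$ requires tracking the Riesz correspondence carefully, and the closed-range property provided by Proposition 2.2 is essential to ensure that the resulting identification recovers the full $coker(\mathcal{T}_{d^{\pm}}\circ B)$ rather than a proper subspace.
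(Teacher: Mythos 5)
Your Steps 1 and 2 are sound and essentially parallel the opening of the paper's appendix: $range(\mathfrak{L}_p)\supseteq range(D|_{L^2_1})$, so $coker(\mathfrak{L}_p)$ lives in $ker(D|_{L^2})$, and the boundary pairing shows $V\perp ker(D|_{L^2_1})$. The problem is Step 3, where there is a genuine kernel/cokernel confusion. You correctly observe that $\langle v_\eta, w\rangle$ factors through $\mathcal{T}_{d^\pm}(B(w))$, so the map $\eta\mapsto v_\eta$ is adjoint to $\mathcal{T}_{d^\pm}\circ B|_W$; but the cokernel of an operator is the kernel of its \emph{adjoint}, so what Riesz duality actually yields is $W/V\cong (V^\perp\cap W) = ker(\mathcal{T}_{d^\pm}\circ B)\cap W$, not $coker(\mathcal{T}_{d^\pm}\circ B)$. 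Indeed this is already what your own intermediate claim (``$V$ is the annihilator in $W$ of $B^{-1}(ker\,\mathcal{T}_{d^\pm})\cap W$'') forces: if $V=K^\perp\cap W$ then $W/V\cong K$. Combining with Step 2, your argument therefore establishes $coker(\mathfrak{L}_p)\cong ker(\mathcal{T}_{d^\pm}\circ B)$, which is a correct statement but not the one to be proved, and conflating it with the cokernel would make Theorem 1.2 a tautology.

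What is missing is precisely the nontrivial passage from $ker(\mathcal{T}_{d^\pm}\circ B)$ to $coker(\mathcal{T}_{d^\pm}\circ B)\oplus ker(D|_{L^2_1})$. The paper does this by writing $u\in coker(\mathfrak{L}_p)$ as a pair $(B(u),v)$, reading off from the integration by parts that $d^-\bar u^+=\bar d^+u^-$, and then \emph{explicitly} defining $c=\bar u^+/\bar d^+=u^-/d^-$ so that $\mathcal{J}:u\mapsto(c,v)$ is the desired bijection; the check that $c\in coker(\mathcal{T}_{d^\pm}\circ B)$ is again a boundary-term computation (5.2), and the surjectivity of $\mathcal{J}$ crucially uses Proposition 3.2, the Lagrangian decomposition $L^2(S^1;\mathbb{C}^2)=B(ker\,D|_{L^2})\oplus e_0B(ker\,D|_{L^2})$. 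Your proposal invokes neither the conjugation map $u\mapsto c$ nor Proposition 3.2, and without these the kernel-type space you arrive at cannot be promoted to a cokernel by closed-range or Riesz arguments alone.
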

The proof of this result is deferred to the appendix. With these correspondences, we have the definition of $\mathbb{K}_0$ and $\mathbb{K}_1$:
\begin{align*}
\mathbb{K}_0&:=ker(\mathcal{T}_{d^{\pm}}\circ B);\\
\mathbb{K}_1&:=coker(\mathcal{T}_{d^{\pm}}\circ B)\times (ker(D|_{L^2_1})).
\end{align*}
Moreover, for any $p=(g_0,\Sigma_0,\psi_0)\in \mathfrak{M}$, we define the Fredholm operator $F_p$ to be
\begin{align*}
F_p:ker(D|_{L^2})&\rightarrow L^2(S^1;\mathbb{C})\oplus ker(D|_{L^2_1});\\
           u      &\mapsto (\mathcal{T}_{d^{\pm}}\circ B(u),0).
\end{align*}
Then $\mathbb{K}_0$ and $\mathbb{K}_1$ are kernel and cokernel of $F_p$ respectively. It is also clear that $index(\mathfrak{L}_p)=index(F_p)$.\\

Therefore we have the following graph,
\begin{diagram}
      &    &     Exp^+  & & \\
      &\ruTo^{p^+}    &    \uTo \mbox{ }\pi^+   &\rdTo^{\mathcal{T}_{d^{\pm}}|_{Exp^+}} &\\
ker(D|_{L^2(M-\Sigma;\mathcal{S}_{g,\Sigma})})&    \rTo^{\mbox{ } \mbox{ } \mbox{ }B\mbox{ } \mbox{ } \mbox{ }}        &L^2(S^1;\mathbb{C}^2) &\rTo^{\mbox{ } \mbox{ } \mbox{ }\mathcal{T}_{d^{\pm}}\mbox{ } \mbox{ } \mbox{ }} & L^2(S^1)\\
     &\rdTo_{p^-}    &   \dTo \mbox{ }\pi^-    &\ruTo_{\mathcal{T}_{d^{\pm}}|_{Exp^-}} &\\
     &    &    Exp^-   &  &
\end{diagram}
\begin{pro}$($\cite[Theorem 6.12]{H}$)$
$\mathcal{T}_{d^{\pm}}|_{Exp^-}$ is a Fredholm operator and\\ $index(\mathcal{T}_{d^{\pm}}|_{Exp^-})=0$.
\end{pro}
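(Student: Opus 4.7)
The plan is to identify $\mathcal{T}_{d^{\pm}}|_{Exp^-}$ with a zeroth-order (pseudo)differential operator on $S^1$, to show that the hypothesis $|d^+|^2+|d^-|^2>0$ is exactly the ellipticity condition, and then to compute the index by deforming $(d^+,d^-)$ to constants. First I would parametrize $Exp^-$ by its first component: the assignment $(a,b)\mapsto a$ is an $\mathbb{R}$-linear isomorphism $Exp^-\cong L^2(S^1;\mathbb{C})$, with the second component recovered by a zeroth-order Fourier multiplier, namely $b=S(a)$ where $S(e^{ilt})=i\,\mathrm{sign}(l)\,e^{ilt}$. Under this identification the restricted operator becomes
\[
\widetilde{\mathcal{T}}(a)\;=\;\bar d^{-}\,a \;-\; d^{+}\,\overline{S(a)},
\]
a bounded real-linear operator on $L^2(S^1;\mathbb{C})\cong L^2(S^1;\mathbb{R}^2)$ obtained by composing multiplications by the smooth functions $\bar d^{-}$ and $-d^{+}$, complex conjugation, and the order-zero Fourier multiplier $S$. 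In the classical pseudodifferential calculus on $S^1$ this is an order-zero real-linear operator.

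Next I would compute the principal symbol separately on the two rays $\xi>0$ and $\xi<0$. Since $S$ acts as multiplication by $+i$ on high positive frequencies and by $-i$ on high negative frequencies, a direct calculation shows that, in the basis $\{e^{i\xi t},e^{-i\xi t}\}$ near each $t\in S^1$, the $2\times 2$ real-linear symbol has determinant a nonzero multiple of $|d^{+}(t)|^{2}+|d^{-}(t)|^{2}$. The leading-coefficient hypothesis then yields invertibility of the principal symbol everywhere, whence Fredholmness by the standard elliptic theory for order-zero operators on $S^1$.

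For the index, I would use a homotopy argument. The space of smooth maps $(d^{+},d^{-}):S^{1}\to \mathbb{C}^{2}\setminus\{0\}$ is path-connected (because $\pi_1(\mathbb{C}^{2}\setminus\{0\})=0$), so any such pair can be deformed through pairs satisfying the ellipticity condition to a pair of nonzero constants $(d^{+}_{0},d^{-}_{0})$; by homotopy invariance of the Fredholm index, it suffices to treat the constant case. With constant coefficients the operator is block-diagonal on Fourier modes: on each invariant pair $\{e^{ilt},e^{-ilt}\}$ it restricts to a finite-dimensional real-linear map whose kernel and cokernel can be computed explicitly. One checks that these have equal dimensions at every frequency, and summing the contributions yields total index zero.

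The main obstacle I anticipate is the careful bookkeeping of the real-linear (rather than complex-linear) structure of $\mathcal{T}_{d^{\pm}}$, which arises from the complex conjugation appearing in $\mathcal{T}_{d^{\pm}}(a,b)=\bar d^{-}a-d^{+}\bar b$. This forces one either to work throughout with real-linear principal symbols, or to double the operator by treating $(a,\bar a)$ as independent variables; conventions for the symbol at $\xi$ versus $-\xi$, and for how $S$ interacts with complex conjugation, must be handled meticulously for both the determinant computation to give the right ellipticity condition and the constant-coefficient tally to give index precisely zero.
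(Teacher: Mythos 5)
The paper does not prove this proposition: it is quoted verbatim from your reference [8, Theorem~6.12] (the author's earlier paper on the Kuranishi structure), so there is no in-text argument to compare against. Evaluated on its own terms, your sketch is a sound and fairly standard route to the result, and the key computations check out. Identifying $Exp^-\cong L^2(S^1;\mathbb{C})$ via $(a,b)\mapsto a$ with $b=S(a)$, $S(e^{ilt})=i\,\mathrm{sign}(l)e^{ilt}$, is exactly right, and writing $\mathcal{T}_{d^{\pm}}|_{Exp^-}$ as $a\mapsto \bar d^-a-d^+\overline{S(a)}=\bar d^-a-d^+S(\bar a)$ exhibits it as an order-zero $\mathbb{R}$-linear pseudodifferential operator. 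Because $a\mapsto\bar a$ reflects frequencies, the honest way to make ``the principal symbol'' precise is either (i) to split $a=a_1+ia_2$ into real-valued components and get the genuine $2\times 2$ system with symbol
$\left(\begin{smallmatrix} d^-_1-i\,\mathrm{sign}(\xi)d^+_1 & d^-_2-i\,\mathrm{sign}(\xi)d^+_2\\ -d^-_2-i\,\mathrm{sign}(\xi)d^+_2 & d^-_1+i\,\mathrm{sign}(\xi)d^+_1 \end{smallmatrix}\right)$,
or (ii) to double to $(a,\bar a)$ and get the $2\times2$ complex system with symbol $\left(\begin{smallmatrix} \bar d^- & -i\,\mathrm{sign}(\xi)d^+\\ -i\,\mathrm{sign}(\xi)\bar d^+ & d^-\end{smallmatrix}\right)$; in either version the determinant is exactly $|d^+|^2+|d^-|^2$ at $\xi=\pm1$, so the hypothesis $|d^+|^2+|d^-|^2>0$ is precisely ellipticity, as you claim. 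Your phrase ``$2\times 2$ real-linear symbol in the basis $\{e^{i\xi t},e^{-i\xi t}\}$'' is really the doubling (ii), since the symbol at $\xi>0$ alone cannot be separated from the reflected mode; saying this explicitly would tighten the argument. For the index, the homotopy argument works because $\mathbb{C}^2\setminus\{0\}\simeq S^3$ is simply connected (so $\mathrm{Map}(S^1,\mathbb{C}^2\setminus\{0\})$ is path-connected), the homotopy can be taken through smooth maps, ellipticity holds along the path, and the resulting family of operators is norm-continuous; the per-mode computation at a constant $(d^+_0,d^-_0)$ indeed gives matching kernel and cokernel on each frequency pair (and on the zero mode, both $2$-dimensional over $\mathbb{R}$ when $d^-_0=0$, both trivial when $d^-_0\neq 0$), hence total index zero. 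A marginally cleaner finish is to observe that $\det\sigma_{\pm}$ is real positive, hence null-homotopic in $GL_2(\mathbb{C})$, so the usual index formula for order-zero elliptic operators on $S^1$ gives zero without the mode-by-mode tally. In short: your proposal is essentially correct and is very likely in the spirit of the argument in [8], modulo the imprecision about what the real-linear symbol means before the doubling is introduced.
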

Now, by Proposition 2.2, we have $B=p^++p^-$ where $p^+$ is compact and $p^-$ is Fredholm. Therefore $\mathcal{T}_{d^{\pm}}\circ B$ is a Fredholm operator because $\mathcal{T}_{d^{\pm}}\circ B= \mathcal{T}_{d^{\pm}}|_{Exp^-}\circ p^-+\mathcal{T}_{d^{\pm}}|_{Exp^+}\circ p^+$ where the former is a composition of Fredholm operators and the later is a composition with a compact operator. This implies that $F_p$ is Fredholm. So
\begin{align*}
index(F_p)&=index(\mathcal{T}_{d^{\pm}}\circ B)\\
&=index(\mathcal{T}_{d^{\pm}}|_{Exp^-}\circ p^-)=index(\mathcal{T}_{d^{\pm}}|_{Exp^-})+index(p^-).
\end{align*}
By Proposition 2.6, we have
\begin{align*}
index(\mathcal{T}_{d^{\pm}}\circ B)= index(p^-).
\end{align*}
Therefore, to prove Theorem 1.2, we have to show that the following proposition is true. 
\begin{pro}
$index(p^-)=-dim(ker(D|_{L^2_1}))$.
\end{pro}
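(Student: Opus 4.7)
The plan is to exploit the identification $ker(B|_{ker(D|_{L^2})}) = ker(D|_{L^2_1})$ to reduce $index(p^-)$ to the index of a suitably enlarged Fredholm extension of $D$, and then finish by an APS-type computation.

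First, I would establish the kernel identification. The inclusion $\supseteq$ is immediate from Proposition~2.1(c): any $u \in ker(D|_{L^2_1})$ has leading expansion $(d^+\sqrt{z}, d^-\sqrt{\bar z}) + u_R$ and so carries no $1/\sqrt{z}$-coefficient, whence $B(u)=0$. Conversely, if $v \in ker(D|_{L^2})$ satisfies $B(v)=0$, then Proposition~2.1(b) gives $v = v_R = O(|z|^{\alpha})$ for some $\alpha>0$; an elliptic-regularity bootstrap near $\Sigma$ (iterating the local expansion of harmonic spinors) upgrades $v$ to $L^2_1$.

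Next, I would enlarge the domain of $D$ so as to incorporate exactly those harmonic spinors whose leading coefficient lies in $Exp^+$. Let $E\colon Exp^+\to L^2$ send $\phi\in Exp^+$ to a cut-off spinor with leading $1/\sqrt{z}$-profile $\phi$, and put $\mathcal{D}^+ := L^2_1 + E(Exp^+) \subset L^2$. The operator $D\colon\mathcal{D}^+\to L^2$ is bounded, and by the kernel identification above its kernel equals $\{w \in ker(D|_{L^2}) : B(w) \in Exp^+\} = ker(p^-)$. Using Proposition~2.1(a) to identify $L^2/range(D|_{L^2_1})$ with $ker(D|_{L^2})$, a bookkeeping computation then yields $coker(D|_{\mathcal{D}^+}) \cong coker(p^-)$, and hence $index(D|_{\mathcal{D}^+}) = index(p^-)$.

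Finally, I would compute $index(D|_{\mathcal{D}^+}) = -dim(ker(D|_{L^2_1}))$ by an APS-type argument on $M - N_\varepsilon(\Sigma)$. The splitting $L^2(S^1;\mathbb{C}^2) = Exp^+ \oplus Exp^-$ plays the role of the APS spectral splitting of the tangential boundary Dirac operator on $\partial N_\varepsilon$ in the limit $\varepsilon\to 0$, with $\mathcal{D}^+$ corresponding to the APS boundary condition projecting onto $Exp^+$. The APS index formula, combined with Proposition~2.6 (which supplies the needed index-zero input at the level of $\mathcal{T}_{d^\pm}|_{Exp^-}$), then delivers the value $-dim(ker(D|_{L^2_1}))$; the sign arises because $\mathcal{D}^+$ is the opposite APS projection to the one producing the ``good'' Fredholm extension with kernel $ker(D|_{L^2_1})$. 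The principal obstacle lies in this last step: adapting the classical APS theorem to a codimension-$2$ defect with $\mathbb{Z}/2$-twisted spinor bundle, justifying the $\varepsilon \to 0$ limit of the APS formula, and tracking the kernel/cokernel identifications above all require careful Fourier-mode analysis near $\Sigma$.
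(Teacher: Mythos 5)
The first two steps of your plan are sound (though the second needs more care than a ``bookkeeping computation'' — one has to track what $D$ does to the cut-off profiles $E(Exp^+)$ modulo $range(D|_{L^2_1})$, which is exactly where the failure of exact harmonicity of the profiles enters). The genuine gap is in the third step, and you correctly flag it yourself: you assert that an APS-type argument ``delivers the value $-dim(ker(D|_{L^2_1}))$'' but do not actually produce it. The difficulty is not merely technical. The APS formula for a Fredholm extension of $D$ with a spectral boundary condition returns a bulk integral plus an $\frac{\eta + h}{2}$-type boundary correction; in the $\varepsilon\to 0$ degeneration to the codimension-2 singular set, neither of these terms visibly specializes to $-\dim ker(D|_{L^2_1})$, which is a \emph{global} quantity attached to the Dirac operator on all of $M-\Sigma$, not a boundary-spectral quantity. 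You also route Proposition~2.6 into this step, but that proposition concerns $\mathcal{T}_{d^\pm}|_{Exp^-}$ and was already consumed in the reduction $index(\mathcal{T}_{d^\pm}\circ B)=index(p^-)$ before Proposition~2.7 is even stated, so it cannot serve as an independent ingredient here.

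What your proposal is missing, and what the paper supplies, is a duality that makes the index computation purely formal and avoids APS entirely. Integration by parts over $M-\Sigma$ produces the pairing $\mathscr{B}(X,Y)=\int_{S^1}\langle X, e_0 Y\rangle$ on $L^2(S^1;\mathbb{C}^2)$, and the paper's Proposition~3.2 shows that $B(ker(D|_{L^2}))$ is a \emph{Lagrangian} subspace: $L^2(S^1;\mathbb{C}^2)=B(ker(D|_{L^2}))\oplus e_0 B(ker(D|_{L^2}))$. Using $e_0(Exp^\pm)=Exp^\mp$ one then identifies $coker(p^-)$ with $B(ker(D|_{L^2}))\cap Exp^+$, while $p^+$ induces an isomorphism $ker(p^-)/ker(D|_{L^2_1})\cong B(ker(D|_{L^2}))\cap Exp^+$. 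These two identifications cancel, leaving $index(p^-)=-\dim ker(D|_{L^2_1})$ with no spectral-asymmetry term to control. Without this Lagrangian self-duality you would, at a minimum, need to prove that the bulk term vanishes in the $\varepsilon\to 0$ limit (it should, by odd-dimensionality, but you must justify the limit) and compute an $\eta$-like boundary contribution and show it cancels against a defect term — none of which is attempted. As written, the final step is a placeholder, not a proof.
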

\begin{remark}
Recall that the index for Fredholm operators will be an invariant on a connected component. Namely, when we compute the index, we can assume that the metric defined on a small tubular neighborhood of $\Sigma$ is Euclidean. So the Dirac operator defined on the tubular neighborhood can be written as
\begin{align}
D=\left( \begin{array}{cc}
-i & 0\\
0 & i 
\end{array} \right)\partial_t+\left( \begin{array}{cc}
0 & 1\\
0 & 0 
\end{array} \right)\partial_z+\left( \begin{array}{cc}
0 & 0\\
-1 & 0 
\end{array} \right)\partial_{\bar{z}}
\end{align}
where $z=x+iy$ (we rewrite the Dirac operator $D = e_0\partial_t+e_1\partial_x+e_2\partial_y$ in terms of $\partial_t$, $\partial_z$ and $\partial_{\bar{z}}$).
\end{remark}

\section{Proof of Proposition 2.7}
\subsection{Integration by parts}
First of all, by part a) of Proposition 2.1, we have $L^2(M-\Sigma;\mathcal{S}_{g,\Sigma})=range(D|_{L^2_1})\oplus ker(D|_{L^2})$. The first step is to extend the map $B$ on a suitable subspace in $L^2(M-\Sigma;\mathcal{S}_{g,\Sigma})$ which contains $ker(D|_{L^2})$. Here let us denote the domain of the Dirac operator on $L^2$ by $Dom(D)$ (for the detail readers can see the p. 91 in [12]). So for any element  $v\in Dom(D)$, we have $Dv\in L^2$.\\

In addition, recall that we parametrize the tubular neighborhood $N-\Sigma\simeq T^2\times (0,R)$ by $(t,z)\in S^1\times \mathbb{D}_R$. If we use the polar coordinate $z=r e^{i\theta}$, for any continuous section $v$ and $r_0\in(0,R)$, $v(r_0,\cdot,\cdot)$ will be a section defined on the bundle $\mathcal{S}_{g,\Sigma}|_{\{r=r_0\}}\rightarrow T^2$. We discussed in the second paragraph of Section 2.1, $\mathcal{S}_{g,\Sigma}|_{\{r=r_0\}}\simeq (\mathcal{S}\otimes \mathcal{I}_{\Sigma})\oplus  (\mathcal{S}\otimes \mathcal{I}_{\Sigma})$. So $v(r_0,\cdot)$ can be regarded as a section on $(\mathcal{S}\otimes \mathcal{I}_{\Sigma})\oplus  (\mathcal{S}\otimes \mathcal{I}_{\Sigma})$. Again, here we can just consider the case that $\mathcal{S}$ is trivial complex line bundle because the general case has the same argument.
\begin{definition}
Let 
\begin{align*}
E_{\partial}=\Big\{v\in Dom(D)\ \Big|\  r^{\frac{1}{2}}&v(r,\cdot)\rightharpoonup X\mbox{ as } r\rightarrow 0,\\
&\mbox{ for some }X\in (L^2(S^1;\mathbb{C})\otimes e^{-\frac{1}{2}i\theta})\oplus (L^2(S^1;\mathbb{C})\otimes e^{\frac{1}{2}i\theta})
\Big\}.
\end{align*}
Here
\begin{align*}
&L^2(S^1;\mathbb{C})\otimes e^{\frac{1}{2}i\theta}:=\{v\otimes e^{\frac{1}{2}i\theta}\in \mathcal{S}\otimes \mathcal{I}_{\Sigma}|v\in L^2(S^1;\mathbb{C})\};\\
&L^2(S^1;\mathbb{C})\otimes e^{-\frac{1}{2}i\theta}:=\{v\otimes e^{-\frac{1}{2}i\theta}\in \mathcal{S}\otimes \mathcal{I}_{\Sigma}|v\in L^2(S^1;\mathbb{C})\}.
\end{align*}
\end{definition}
Here the limit is in weak sense. The existence of this limit is equivalently to say: When we write $v=(v^+,v^-)$ on the tubular neighborhood of $\Sigma$,
\begin{align}
\left(\begin{array}{c}
\sqrt{z}v^+\\
\sqrt{\bar{z}}v^-
\end{array}\right) \rightharpoonup \left(\begin{array}{c}
y^+\\
y^-
\end{array}\right)\in L^2(S^1;\mathbb{C}^2).
\end{align}
for some $y^{\pm}$ as $r=|z|$ goes to 0.\\

We denote by
\begin{align*}
\partial(v)
\end{align*}
$X$ when the weak limit exist. Meanwhile, we can extend the map $B$ on $E_{\partial}$ by using (3.1). When $v\in ker(D|_{L^2})$, this limit exists and equals $B(v)$. 
We can see that $B(E_{\partial})=L^2(S^1;\mathbb{C}^2)$ because for any $Y=(y^+,y^-)\in L^2(S^1;\mathbb{C}^2)$, then there exists $\partial(u)=Y$ with
\begin{align*}
u=\left(\begin{array}{c}
\frac{y^+}{\sqrt{z}}\\
\frac{y^-}{\sqrt{\bar{z}}}
\end{array}\right)\chi\in E_{\partial}
\end{align*}
where $\chi$ is a continuous function with value 1 near $\Sigma$ and 0 on $M-N$. Accordingly, this new domain we chose maps onto the space $L^2(S^1;\mathbb{C}^2)$. By using this fact and part a) of Proposition 2.1, for any $Y\in B(ker(D|_{L^2}))^{\perp}$, there exists an element $w\in range(D|_{L^2_1})$ such that $Y-B(w)\in B(ker(D|_{L^2}))$.\\

Secondly, we consider the integration by parts. Let $v,w\in E_{\partial}$. Then we have
\begin{align}
\int_{M-\Sigma} \langle Dv,w\rangle+\langle v, Dw\rangle=\int_0^{2\pi}\int_{S^1}\langle \partial(v), e_*\partial(w)\rangle dt d\theta
\end{align}
where $e_*$ is the Clifford multiplication $cl(\partial_r)=\left( \begin{array}{cc}
0 & e^{-i\theta}\\
-e^{i\theta} & 0 
\end{array} \right)$ by using the notation in (2.4). So (3.2) can be written as
\begin{align}
\int_{M-\Sigma} \langle Dv,w\rangle+\langle v, Dw\rangle=2\pi\int_{S^1}\langle B(v), e_0B(w)\rangle dt
\end{align}
where $e_0=\left( \begin{array}{cc}
0 & 1\\
-1 & 0 
\end{array} \right)$.

 Notice that the Clifford multiplication $e_0$ can be regarded as a map from $Exp^{\pm}$ to $Exp^{\mp}$. So we can define the following nondegenerate bilinear form
\begin{align*}
\mathscr{B}(X,Y)=\int_{S^1}\langle X,e_0Y\rangle
\end{align*}
on $L^2(S^1)$. Meanwhile, we also have the standard inner product
\begin{align*}
(X,Y)=\int_{S^1}\langle X,Y\rangle
\end{align*}
on $L^2(S^1)$, and we write $X\perp Y$ if and only if $(X,Y)=0$.\\

\begin{pro}
$L^2(S^1;\mathbb{C}^2)=B(ker(D|_{L^2}))\oplus e_0B(ker(D|_{L^2}))$. Namely, $B(ker(D|_{L^2}))$ can be regarded as a Lagrangian subspace of $L^2(S^1;\mathbb{C}^2)$.
\end{pro}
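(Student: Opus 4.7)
The plan is to show $V := B(\ker(D|_{L^2}))$ coincides with its $\mathscr{B}$-orthogonal complement $V^{\perp_{\mathscr{B}}}$ in $L^2(S^1;\mathbb{C}^2)$. Since $\mathscr{B}(X,Y)=(X,e_0Y)$, one has $V^{\perp_{\mathscr{B}}}=(e_0V)^{\perp}$, so $V=V^{\perp_{\mathscr{B}}}$ automatically forces $V$ to be closed, and then $L^2(S^1;\mathbb{C}^2)=e_0V\oplus(e_0V)^{\perp}=e_0V\oplus V$ follows (using that $e_0$ is a bounded isomorphism with $e_0^2=-\mathrm{Id}$). The isotropy $V\subset V^{\perp_{\mathscr{B}}}$ is immediate from the integration-by-parts identity (3.3): for $v_1,v_2\in\ker(D|_{L^2})\subset E_{\partial}$ the bulk integral vanishes and hence $\mathscr{B}(B(v_1),B(v_2))=0$.

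For the reverse inclusion, fix $Y\in V^{\perp_{\mathscr{B}}}$ and use the surjectivity $B(E_{\partial})=L^2(S^1;\mathbb{C}^2)$ to pick $u\in E_{\partial}$ with $B(u)=Y$. For any $v\in\ker(D|_{L^2})$, (3.3) combined with $Dv=0$ gives
\[
\int_{M-\Sigma}\langle Du,v\rangle=2\pi\,\mathscr{B}(Y,B(v))=0,
\]
so $Du$ is $L^2$-orthogonal to $\ker(D|_{L^2})$. By Proposition 2.1(a) we can write $Du=D\tilde w$ for some $\tilde w\in L^2_1$; then $u-\tilde w\in\mathrm{Dom}(D)$ satisfies $D(u-\tilde w)=0$, so $u-\tilde w\in\ker(D|_{L^2})\subset E_{\partial}$. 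Since $\tilde w=u-(u-\tilde w)\in E_{\partial}$, the quantity $B(\tilde w)$ is defined, and the argument closes once we establish the key lemma: $B(\tilde w)=0$ for every $\tilde w\in L^2_1(M-\Sigma)\cap E_{\partial}$. This immediately yields $Y=B(u)=B(u-\tilde w)\in V$.

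The main obstacle is this key lemma, which I would prove by density. Because $\Sigma$ is codimension $2$ in $M$, its $H^1$-capacity vanishes, so $C^{\infty}_c(M-\Sigma)$ is dense in $L^2_1(M-\Sigma)$. Pick $\tilde w_n\in C^{\infty}_c(M-\Sigma)$ with $\tilde w_n\to\tilde w$ in $L^2_1$; each $\tilde w_n$ vanishes near $\Sigma$, so $B(\tilde w_n)=0$. Applying (3.3) to the pair $(v,\tilde w_n)$ for any $v\in E_{\partial}$ gives $\int_{M-\Sigma}(\langle Dv,\tilde w_n\rangle+\langle v,D\tilde w_n\rangle)=0$, and passing $n\to\infty$ via the $L^2$-convergence of both $\tilde w_n$ and $D\tilde w_n$, then invoking (3.3) on the limit, yields $\mathscr{B}(B(v),B(\tilde w))=0$ for every $v\in E_{\partial}$. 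Nondegeneracy of $\mathscr{B}$ together with $B(E_{\partial})=L^2(S^1;\mathbb{C}^2)$ then forces $B(\tilde w)=0$. The delicate points are the codimension-$2$ capacity statement that justifies the density (this is exactly where the geometric hypothesis on $\Sigma$ enters) and the verification that the weak boundary map $r^{1/2}\tilde w(r,\cdot)\rightharpoonup X$ is compatible with $L^2_1$-limits; everything else is formal manipulation of (3.3).
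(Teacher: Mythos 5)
Your proof takes essentially the same route as the paper's, just repackaged in Lagrangian language: the paper shows $B(\ker D|_{L^2})\perp e_0 B(\ker D|_{L^2})$ and then verifies $B(\ker D|_{L^2})^{\perp}\cap (e_0 B(\ker D|_{L^2}))^{\perp}=\{0\}$, which is the same content as your inclusion $V^{\perp_{\mathscr{B}}}\subseteq V$. The core reduction in both cases is identical: write a boundary value $Y=B(u)$, use $\mathscr{B}$-orthogonality to $V$ together with (3.3) to conclude that $Du$ is $L^2$-orthogonal to $\ker(D|_{L^2})$, then use Proposition 2.1(a) to write $Du=D\tilde w$ with $\tilde w\in L^2_1$ and peel off the kernel part.

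What you do that the paper does not is name the load-bearing step. The paper silently uses the fact that $B$ annihilates $L^2_1\cap E_{\partial}$ at the line ``$Du=u'+v'$, which implies $B(Du)=B(v')$'' --- this is only true because $B(u')=0$ for $u'\in L^2_1$. You isolate this as a ``key lemma'' and give it a proof via density of $C^{\infty}_c(M-\Sigma)$, which is correct in spirit. One remark on the justification: you attribute the density to the vanishing $H^1$-capacity of the codimension-2 set $\Sigma$. That statement is for untwisted Sobolev functions, and it requires the logarithmic cutoff; for sections of the $\mathbb{Z}/2$-twisted bundle $\mathcal{S}_{g,\Sigma}$ the situation is actually easier, because the half-integer angular modes give a genuine Hardy inequality $\int_N |u|^2/r^2 \lesssim \|\nabla u\|_{L^2}^2$. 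With that, the naive linear cutoff $\chi_\epsilon$ already works ($\int |\nabla\chi_\epsilon|^2|u|^2\to 0$), and one can also deduce $B(u')=0$ directly from the Hardy bound on each Fourier mode without passing through a density argument at all. So the lemma you flagged is true and your proof of it is sound; the $\mathbb{Z}/2$ structure just makes it more elementary than the general capacity machinery suggests.
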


\begin{proof}
To prove this proposition, by using equality (3.3), we have 
\begin{align*}
B(ker(D|_{L^2}))\perp e_0B(ker(D|_{L^2})).
\end{align*}
This implies that $B(ker(D|_{L^2}))^{\perp}\subseteq e_0B(ker(D|_{L^2}))$. So one can prove this proposition by showing that $B(ker(D|_{L^2}))^{\perp}= e_0B(ker(D|_{L^2}))$. In addition, every element in $B(ker(D|_{L^2}))^{\perp}$ can be written as $B(Du)+B(v)$ for some $u\in L^2_1$ and $v\in ker(D|_{L^2})$. Therefore, to prove $B(ker(D|_{L^2}))^{\perp}= e_0B(ker(D|_{L^2}))$, one needs to show that if there is a $B(Du)+B(v)\in B(ker(D|_{L^2}))^{\perp}$ such that $\mathscr{B}(B(Du)+B(v), Y)=0$ for all $Y\in B(ker(D|_{L^2}))$, then $B(Du)+B(v)=0$.\\

 Since $v\in ker(D|_{L^2})$, we always have $\mathscr{B}(B(v),Y)=0$. So we can rewrite our assumption as follows
\begin{align*}
\mathscr{B}(B(Du),B(w))=0
\end{align*}
for all $w\in ker(D|_{L^2})$. By (3.3) again,
\begin{align*}
\int_{M-\Sigma}\langle D^2u, w\rangle=0
\end{align*}
for all $w\in ker(D|_{L^2})$. So we have $D^2u\in ker(D|_{L^2})^{\perp}=range(D|_{L^2_1})$. This means that
 $D^2u=Du'$ for some $u'\in L^2_1$. Therefore $Du=u'+v'$ for some $v'\in ker(D|_{L^2})$, which implies that $B(Du)=B(v')\in B(ker(D|_{L^2}))$. Eventually, we have $B(Du)+B(v)\in B(ker(D|_{L^2}))\cap  B(ker(D|_{L^2}))^{\perp}=\{0\}$. So we prove this proposition.
\end{proof}

 Now the following fact can be derived immediately from this proposition: 
\begin{align*}
coker(p^-)&=[p^-(ker(D|_{L^2}))]^{\perp}=[\pi^-\circ B(ker(D|_{L^2}))]^{\perp}\\
&=B(ker(D|_{L^2}))^{\perp}\cap ker(\pi^+)=e_0B(ker(D|_{L^2}))\cap ker(\pi^+)\\
&=\{v\in Exp^-|v\in e_0B(ker(D|_{L^2}))\}\\
&=\{e_0v\in Exp^+|e_0v\in B(ker(D|_{L^2}))\}\\
&=B(ker(D|_{L^2})\cap Exp^+
\end{align*}
(also by the fact that $e_0^2=-1$).

Here we prove $ker(p^-)/ker(D|_{L^2_1}) \cong B(ker(D|_{L^2})\cap Exp^+$. If we take the quotient of $p^+:ker(p^-)\rightarrow B(ker(D|_{L^2})\cap Exp^+$  by its kernel $ker(D|_{L^2_1})$, we have an injective map from $ker(p^-)/ker(D|_{L^2_1})$ to $B(ker(D|_{L^2})\cap Exp^+$. It is obvious by the definition of its range that the this map is onto.  This means $B(ker(D|_{L^2})\cap Exp^+\cong ker(p^-)/ker(D|_{L^2_1})$. This completes the proof of Proposition 2.8.

\section{4-dimensional setting}
\subsection{Main setting}
In this section we consider the 4-dimensional generalization of the index theorem with respect to the the $\mathbb{Z}/2$-harmonic spinors. Let $M$ be a closed oriented smooth 4-manifold with the second Stiefel-Whitney class $w_2=0$. $\mathcal{X}$ be the space of Riemannian metrics defined on $M$. In this case, for any $g\in \mathcal{X}$, there exists a (not necessarily unique) spinor bundle $\mathcal{S}=\mathcal{S}^+\oplus\mathcal{S}^-$. 
\begin{align*}
\mathcal{A}_{T^2}=\{ C^1\mbox{-embedding surface } &\Sigma\subset M \mbox{ with trivial normal bundle},\\
&\mbox{ }\mbox{ }\mbox{ }\mbox{ }\mbox{ }\mbox{ }\mbox{ }\mbox{ }\mbox{ }\mbox{ }\mbox{ }\mbox{ }\mbox{ }\Sigma\mbox{ is homeomorhic to }T^2\}.
\end{align*}
Let $\Sigma\in \mathcal{A}_{T^2}$ and $g\in \mathcal{X}$. Recall that a $\mathbb{Z}/2$-spinor bundle with respect to $(g, \Sigma)$ is a spinor bundle which can be written as $\mathcal{S}_g\otimes \mathcal{I}_{\Sigma}$, where $\mathcal{S}_g$ is a spinor bundle over $(M,g)$ and $\mathcal{I}_{\Sigma}$ is a non-extendable real line bundle over $M-\Sigma$. Again, we use $\mathcal{S}_{g,\Sigma}$ to denote one of these bundles. Moreover, because there is a standard decomposition $\mathcal{S}_g= \mathcal{S}^+_g\oplus \mathcal{S}_g^-$, we have $\mathcal{S}_{g,\Sigma}=\mathcal{S}_{g,\Sigma}^+\oplus\mathcal{S}_{g,\Sigma}^-$ accordingly.\\

The Dirac operator $D$ on $\mathcal{S}_{g,\Sigma}$ can also be decomposed as $D=D^+\oplus D^{-}$ where $D^{\pm}$ map $\mathcal{S}^{\pm}_{g,\Sigma}$ to $\mathcal{S}^{\mp}_{g,\Sigma}$. We consider one of them, say $D^+$, and define the moduli space as the following:
\begin{align*}
\mathfrak{M}_{T^2}=\{(\psi,\Sigma,g)|&D^+(\psi)=0, \psi\in C^{\infty}(\mathcal{S}^+_{g,\Sigma})\\
& |\psi| \mbox{ can be extended as a H\"older continuous function on } M ,\\
 &\mbox{ }\mbox{ }\mbox{ with its zero locus
containing } \Sigma,\\
& \frac{|\psi|(p)}{\mbox{dist}(p,\Sigma)^{\frac{1}{2}}}> 0\mbox{ near }\Sigma,\\
& \|\psi\|_{L^2_1}>0.\}
\end{align*}
and $\mathfrak{M}_{T^2,g_0}=\mathfrak{M}_{T^2}\cap \{g=g_0\}$.\\

In general, we can define the moduli space $\mathfrak{M}_{X}$ for any Riemann surface $X$. In fact, we will have the same index theorem as the case $X=T^2$. However in this paper we focus on this special case because we can precisely write down the model solution for Dirac equation in the tubular neighborhood of $\Sigma$.\\

\subsection{Linearization of $\mathfrak{M}_{T^2}$}
To prove a four dimensional version of Theorem 1.2, we should start with the linearization of $\mathfrak{M}_{T^2}$. This part has the same structure as the 3-dimensional case. Consider the model of the tubular neighborhood, $T^2\times  \mathbb{D}_R$ where $\mathbb{D}_R$ is a complex disc of radius $R>0$ in $\mathbb{C}$, the Dirac operator can be written as
\begin{align}
D^+=e_0 \hat{D}+ e_1 \partial_z+ e_2 \partial_{\bar{z}}
\end{align}
where $\hat{D}$ is the Dirac operator defined on $T^2$ and $e_0, e_1, e_2$ are Clifford multiplications  with $e_0=\left( \begin{array}{cc}
0 & 1\\
-1 & 0 
\end{array} \right)$, $e_1=\left( \begin{array}{cc}
0 & 1\\
0 & 0 
\end{array} \right)$ and $e_2=\left( \begin{array}{cc}
0 & 0\\
-1 & 0 
\end{array} \right)$.\\

Note that, in the 3-dimensional case, a general solution of Dirac equation (2.4) can be written as follows: For any $C^{\infty}$-spinor $\mathfrak{u}$, it can be written as a Fourier series 
\begin{align*}
\mathfrak{u}(t,r,\theta)=\sum_{l,k}e^{ilt}\left( \begin{array}{c}
e^{i(k-\frac{1}{2})\theta}U^+_{k,l}(r)\\
e^{i(k+\frac{1}{2})\theta}U^-_{k,l}(r)
\end{array} \right)
\end{align*}
where $k$ runs over $\mathbb{Z}$ and $l$ runs over $\mathbb{Z}$ or $\mathbb{Z}+\frac{1}{2}$. Then $U^{\pm}_{k,l}$ will satisfy an ODE provided by the Dirac equation $D\mathfrak{u}=0$. It can be written as
\begin{align}
\frac{d}{dr}\left( \begin{array}{c}
U^{+}\\
U^{-}
\end{array} \right)_{k,l}
=
\left( \begin{array}{cc}
\frac{(k-\frac{1}{2})}{r}& -l\\
-l & -\frac{(k+\frac{1}{2})}{r}
\end{array} \right)
\left( \begin{array}{c}
U^{+}\\
U^-
\end{array} \right)_{k,l}.
\end{align}

The situation is similar in 4 dimensional case: Parametrizing $T^2\times \mathbb{R}^2$ by $\{(x,r,\theta)\in T^2\times \mathbb{R}_{\geq 0}\times [0,2\pi]\}$, we can write a $C^{\infty}$-section $\mathfrak{u}$ as follows
\begin{align*}
\mathfrak{u}(x,r,\theta)=\sum_{a,k}\left( \begin{array}{c}
v^\wedge_a(x)e^{i(k-\frac{1}{2})\theta}U^+_{k,a}(r)\\
v^\vee_a(x)e^{i(k+\frac{1}{2})\theta}U^-_{k,a}(r)
\end{array} \right).
\end{align*}
Here $k$ still runs over $\mathbb{Z}$ and $a$ runs over $\Lambda$, the eigenvalues of $\hat{D}$ (counting repeatedly if we have repeat eigenvalues). $v_a=(v^\wedge_a,v^\vee_a)$ satisfies $e_0\hat{D}^+v^\wedge_a=av^\vee_a$ and $e_0\hat{D}^-v^\vee_a=av^\wedge_a$. $\{v^\wedge_a\}$ $\{v^\vee_a\}$ will be  orthonormal bases of $L^2(\mathcal{S}^+_{\Sigma})$ and $L^2(\mathcal{S}^-_{\Sigma})$ respectively.\\

 In our case that $\Sigma\simeq S^1\times S^1$ equipped with Euclidean metric, we can write down these $v_a=(v^\wedge_a,v^\vee_a)$ precisely. Since we have assumed the Fredholmness of the linearization, the index wouldn't change under any perturbation of metrics. Therefore one can obtain the index formula under this assumption. Let us consider the Dirac operator $D^+$ with respect to the standard flat metric $dt^2+ds^2+dr^2+rdrd\theta+d\theta^2$ and $\mathcal{S}^{\pm}_{\Sigma}$ are trivial, then we have
\begin{align*}
D^+=\left(\begin{array}{cc}
1& 0\\
0& 1
\end{array}\right)\partial_t+
\left(\begin{array}{cc}
-i& 0\\
0& i
\end{array}\right)\partial_s+\left(\begin{array}{cc}
0& 1\\
0& 0
\end{array}\right)\partial_z+\left(\begin{array}{cc}
0& 0\\
-1& 0
\end{array}\right)\partial_{\bar{z}}.
\end{align*}
The sum of the first two terms is $e_0\hat{D}$ defined above. So we can define
\begin{align*}
\Big\{v_{l,m}:=(e^{ilt} e^{ims},\frac{-il+m}{\sqrt{l^2+m^2}} e^{ilt} e^{ims})\Big|(l,m)\in\mathbb{Z}\times\mathbb{Z}-(0,0)\Big\}
\end{align*}
and $\Lambda:=\{\sqrt{l^2+m^2}|(l,m)\in\mathbb{Z}\times\mathbb{Z}-(0,0)\}$. When $\mathcal{S}^{\pm}_{\Sigma}$ are non-trivial, we can simply replace those $\mathbb{Z}$ by $\mathbb{Z}+\frac{1}{2}$ respectively according to the non-triviality of $\mathcal{S}^{\pm}_{\Sigma}$.\\

In the following paragraphs, we define
\begin{align*}
sign(l,m):=\frac{l+im}{\sqrt{l^2+m^2}}.
\end{align*}
So
\begin{align*}
v_{l,m}=(e^{ilt}e^{ims},-sign(l,m)ie^{ilt}e^{ims}).
\end{align*}
Notice that this sign function $sign(l,m)$ can be regraded as a generalized sign for paring numbers: We have $sign(l,0)=sign(l)$ and $sign(0,m)=isign(m)$.\\

 Given any $k,a=\sqrt{l^2+m^2}$, $U^{\pm}_{k,a}$ will satisfy the same ODE system,
\begin{align}
\frac{d}{dr}\left( \begin{array}{c}
U^{+}\\
U^{-}
\end{array} \right)_{k,a}
=
\left( \begin{array}{cc}
\frac{(k-\frac{1}{2})}{r}& -a\\
-a & -\frac{(k+\frac{1}{2})}{r}
\end{array} \right)
\left( \begin{array}{c}
U^{+}\\
U^-
\end{array} \right)_{k,a},
\end{align}
as they did in the 3-dimensional case. By solving this ODE system, we will have
\begin{align}
\mathfrak{u}(x,r,\theta)=\sum_{k,a}&
\left( \begin{array}{c}
u^{+}_{k,a}v^\wedge_a(x)e^{i(k-\frac{1}{2})\theta}\mathfrak{I}_{k-\frac{1}{2},a}(r)\\
-u^{+}_{k,a}v^\vee_a(x)e^{i(k+\frac{1}{2})\theta}a\mathfrak{I}_{k+\frac{1}{2},a}(r)
\end{array} \right)\\
+&
\left( \begin{array}{c}
-u^{-}_{k,a}v^\wedge_a(x)e^{i(k-\frac{1}{2})\theta}a\mathfrak{I}_{k-\frac{1}{2},a}(r)\\
u_{k,a}^{-}v^\vee_a(x)e^{i(k+\frac{1}{2})\theta}\mathfrak{I}_{k+\frac{1}{2},a}(r)
\end{array} \right)\nonumber
\end{align}
for some $u^{\pm}_{k,a}\in \mathbb{C}$. Here $\mathfrak{I}_{p,a}(r):=a^{-p}\sum_{n=0}^{\infty}\frac{1}{n!\Gamma(n+p+1)}(\frac{ar}{2})^{2n+p}$ is the modified Bessel function (when $a=0$, we simply take $\mathfrak{I}_{p,0}(r):=r^p$).\\

Now, recall that these modified Bessel functions have order $\mathfrak{I}_{p,a}(r)=O(r^p)$. So if $\mathfrak{u}(x,r,\theta)\in L^2$, then $u^{\pm}_{k,a}=0$ for all $k<0$ and the leading order term of $\mathfrak{u}$ will be of order $O(r^{-\frac{1}{2}})$. Similarly, if $\mathfrak{u}\in L^2_1$, then the leading order term of $\mathfrak{u}$ will be of order $O(r^{\frac{1}{2}})$. Therefore, $b)$ and $c)$ in Proposition 2.1 can be derived in 4 dimensional case. In other words, we have
\begin{pro}
\ \\
$\mbox{a).}$ $ L^2(M-\Sigma;\mathcal{S}^+_{g,\Sigma})=ker(D^+|_{L^2})\oplus range(D^-|_{L^2_1})$,\\
$\mbox{b).}$ For any $v\in ker(D^+|_{L^2})$, we have
\begin{align*}
v=\left(\begin{array}{c}
\frac{c^+(x)}{\sqrt{z}}\\
\frac{c^-(x)}{\sqrt{\bar{z}}}
\end{array}\right)+v_{R},
\end{align*}
$\mbox{ }\mbox{ }\mbox{ }$    where $v_{R}=O(|z|^{\alpha})$ for some $\alpha >0 $ and $c^{\pm}\in C^{\infty}(N-\Sigma, \mathcal{S}^{\pm}_{T^2}\otimes\mathcal{I}_{\Sigma})$.\\
$\mbox{c).}$ For any $u\in ker(D^+|_{L^2_1})$, we have
\begin{align*}
u=\left(\begin{array}{c}
 d^+(x)\sqrt{z}\\
 d^-(x)\sqrt{\bar{z}}
\end{array}\right)+u_R.
\end{align*}
$\mbox{ }\mbox{ }\mbox{ }$    where $u_{R}=O(|z|^{\beta})$ for some $\beta >\frac{1}{2}$ and $d^{\pm}\in C^{\infty}(N-\Sigma, \mathcal{S}^{\pm}_{T^2}\otimes\mathcal{I}_{\Sigma})$.\\
Here $\mathcal{S}^{\pm}_{T^2}$ are the pull-back bundles of $\mathcal{S}^{\pm}_{\Sigma}$ by the map $\pi:N-\Sigma\rightarrow \Sigma$ and $\mathcal{S}_{\Sigma}=\mathcal{S}_{\Sigma}^+\oplus \mathcal{S}_{\Sigma}^-$ is a spinor bundle defined on $\Sigma$.
\end{pro}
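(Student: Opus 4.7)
The plan is to adapt the proof of Proposition 2.1 (given in [8]) to the 4-dimensional setting, using the explicit separation-of-variables machinery already developed in Section 4.2. The only structural change is that the spectral parameter $l\in\mathbb{Z}$ of the 3-dimensional analysis is replaced by the eigenvalue $a\in\Lambda$ of $\hat{D}$ on $T^2$, with associated eigensections $\{v_a\}$; the radial ODE (4.3) is literally the same as the one in the 3-dimensional proof, so the Bessel-function asymptotics stated above do most of the work for parts (b) and (c).

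For parts (b) and (c), any harmonic spinor on $N-\Sigma$ admits the expansion (4.4) after projecting onto the eigenbasis $\{v_a\}$ of $\hat{D}$ and onto the half-integer $\theta$-Fourier modes. From $\mathfrak{I}_{p,a}(r)=O(r^p)$ and the volume form $r\,dr\,d\theta$, one computes $\|r^{p}e^{im\theta}\|_{L^2}^2\sim \int r^{2p+1}\,dr$, which is finite near $r=0$ iff $p>-1$. For a spinor in $L^2$ this forces $u^{\pm}_{k,a}=0$ whenever $k<0$; the surviving $k=0$ modes contribute the radial profile $r^{-1/2}$ in each component, the $\theta$-factors $e^{-i\theta/2}$ and $e^{i\theta/2}$ assembling into $1/\sqrt{z}$ and $1/\sqrt{\bar{z}}$. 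Summing over $a\in\Lambda$ produces sections $c^{\pm}(x)$ of $\mathcal{S}^{\pm}_{T^2}\otimes\mathcal{I}_{\Sigma}$, and the remainder from $k\geq 1$ is $O(r^{1/2})$; this gives (b). The same argument with $L^2_1$ in place of $L^2$ requires one extra derivative to be $L^2$, raising the cutoff to $k\geq 1$; the surviving $k=1$ modes give the $r^{1/2}$ leading profile $d^{\pm}(x)\sqrt{z}$, $d^{\pm}(x)\sqrt{\bar{z}}$, with remainder $O(r^{3/2})$, proving (c). Smoothness of $c^{\pm}$ and $d^{\pm}$ on $\Sigma$ follows from rapid decay of the eigenfunction coefficients for a smooth spinor, combined with elliptic regularity for $\hat{D}$ on $T^2$.

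For part (a), the decomposition is Hodge-theoretic, relying on the formal adjoint identity $(D^+)^{\ast}=D^-$ on $M-\Sigma$. Orthogonality of $\ker(D^+|_{L^2})$ and $\text{range}(D^-|_{L^2_1})$ follows from integration by parts on the complement of an $\varepsilon$-tube around $\Sigma$: the resulting boundary integral over $T^2\times\{r=\varepsilon\}$ is estimated via (b) and (c) to be $O(\varepsilon^{1/2})$, hence vanishes as $\varepsilon\to 0$. Closedness of $\text{range}(D^-|_{L^2_1})$ and the spanning statement then follow from the standard Fredholm package for elliptic operators with codimension-2 singular locus, importing the framework built for the 3-dimensional Dirac operator in [8].

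I expect the main obstacle to be part (a). The Bessel-function analysis behind (b) and (c) is a mechanical translation of the 3-dimensional proof, but (a) requires verifying that the functional-analytic infrastructure --- closed range for $D^{\pm}|_{L^2_1}$, the correct self-adjoint extension of $D$ on $L^2(M-\Sigma;\mathcal{S}_{g,\Sigma})$, and compatibility of the two extensions with the weak limit $\partial(\cdot)$ at $\Sigma$ --- carries over intact from the local model $S^1\times\mathbb{D}_R$ to $T^2\times\mathbb{D}_R$. This is the place where the standing Fredholm assumption on $F_{T^2,p}$ (discussed in Section 1) enters, and its careful use is the crux of the 4-dimensional extension.
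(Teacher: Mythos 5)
Your approach matches the paper's, which in fact defers the whole proof to [8] (the 3-dimensional Proposition 2.1) after recording the separation of variables (4.4) and the Bessel asymptotics $\mathfrak{I}_{p,a}(r)=O(r^p)$ in the preceding paragraphs. Replacing the circle frequency $l$ by the $\hat{D}$-eigenvalue $a\in\Lambda$, observing that the radial system (4.3) is unchanged, and reading off the $k<0$ exclusion for $L^2$ and the $k\le 0$ exclusion for $L^2_1$ from the $r^{2p+1}\,dr$ volume weight is exactly what the text does for (b) and (c); likewise, deferring the closed-range and orthogonal-decomposition package for (a) to the framework of [8] is how the paper treats it.

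One remark in your last paragraph is off target and worth correcting: the standing Fredholm assumption is \emph{not} what carries part (a). That assumption concerns $\mathcal{T}_{d^{\pm}}$, an operator on $L^2(T^2;\mathcal{S}^+_{\Sigma}\oplus\mathcal{S}^-_{\Sigma})$ built from the leading coefficient $d^{\pm}$ of a particular $\mathbb{Z}/2$-harmonic spinor; by the discussion after Proposition 4.2, its Fredholmness is equivalent to Fredholmness of the linearization $F_{T^2,p}$ and enters only in Theorem 4.4 and Proposition 4.5 (the index formula). Proposition 4.1, like Proposition 4.2, is proved unconditionally: the Hodge decomposition $L^2=\ker(D^+|_{L^2})\oplus\mathrm{range}(D^-|_{L^2_1})$ and the compatibility with the boundary limit $\partial(\cdot)$ rest on the weighted-Sobolev and Bessel analysis imported from [8], not on any hypothesis about $\mathcal{T}_{d^{\pm}}$. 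Keeping this logical separation straight matters, since it is precisely what isolates the single open analytic assumption in the 4-dimensional story.
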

The proof of this proposition is same as the proof of Proposition 2.1 which can be found in [8]. So we omit it here.\\

Here these leading coefficients $(c^+(x),c^-(x)), (d^+(x),d^-(x))$ are in $L^2(T^2;\mathcal{S}_{\Sigma})\cong L^2(T^2;\mathcal{S}^+_{\Sigma}\oplus \mathcal{S}^-_{\Sigma})$. By Proposition 4.1, for any element $(g,\Sigma,\psi)\in\mathfrak{M}_{T^2}$, the linearization argument in Section 2.2 can be derived. So we have the following composition of maps.
\begin{align}
ker(D^+|_{L^2(M-\Sigma;\mathcal{S^+}\otimes \mathcal{I})})   \rTo^{\mbox{ } \mbox{ } \mbox{ }B\mbox{ } \mbox{ } \mbox{ }}        L^2(T^2;\mathcal{S}^+_{\Sigma}\oplus \mathcal{S}^-_{\Sigma}) \rTo^{\mbox{ } \mbox{ } \mbox{ }\mathcal{T}_{d^{\pm}}\mbox{ } \mbox{ } \mbox{ }}  L^2(T^2;\mathcal{S}^+_{\Sigma}\otimes \overline{\mathcal{S}^-_{\Sigma}}).
\end{align}
The map $B$ in this short sequence is also defined in [9] which will give us a useful index formula in Theorem 4.3. Here we need to explain the map $\mathcal{T}_{d^{\pm}}$ more. As we follow the argument in Section 2.2, we will have
\begin{align*}
d^+\eta=c^+;\\
d^-\bar{\eta}=c^-
\end{align*}
where $\eta$ is a complex value function and $(d^+,d^-), (c^+,c^-)$ are in $L^2(T^2;\mathcal{S}^+_{\Sigma})\oplus L^2(T^2;\mathcal{S}^-_{\Sigma})$. To kill the term on the left hand side of this equation, we tenser both sides of the first equation on the right with the conjugate of $d^-$ in the conjugate bundle of $\mathcal{S}_{\Sigma}^-$, denoted by $\bar{d}^-$. Meanwhile, tensor the conjugate of the second equation on the left with $d^+$. So we have
\begin{align*}
\eta(d^+\otimes \bar{d}^-)=(c^+\otimes \bar{d}^-);\\
\eta(d^+\otimes \bar{d}^-)=(d^+\otimes \bar{c}^-).
\end{align*}
Therefore, we define $\mathcal{T}_{d^{\pm}}$ by
\begin{align*}
\mathcal{T}_{d^{\pm}}(c^+,c^-)=(c^+\otimes \bar{d}^-)-(d^+\otimes \bar{c}^-).
\end{align*}
By the same argument we used in the 3-dimensional case, the linearization of $\mathfrak{M}_{T^2,g}$ can be locally written as a map between the following two spaces:
\begin{align*}
\mathbb{K}_0&=ker(\mathcal{T}_{d^{\pm}}\circ B);\\
\mathbb{K}_1&=coker(\mathcal{T}_{d^{\pm}}\circ B)\times (ker(D^-|_{L^2_1})),
\end{align*}
which are the kernel and cokernel of the map $F_{T^2,p}$,
\begin{align}
F_{T^2,p}:ker(D^+|_{L^2(M-\Sigma;\mathcal{S}^+_{g,\Sigma})})&\rightarrow L^2(T^2;\mathcal{S}^+_{\Sigma}\otimes \overline{\mathcal{S}^-_{\Sigma}})\oplus ker(D|_{L^2_1});\\
           u      &\mapsto ( \mathcal{T}_{d^{\pm}}\circ B(u),0).\nonumber
\end{align}

To mimic the argument in the 3-dimensional case, we shall define the decomposition $\pi^{\pm}$, which appears in the following subsection. 

\subsection{Decomposition of $\pi^{\pm}$} Unlike the 3-dimensional case, here we wouldn't use a symmetric decomposition to make $L^2(T^2;\mathcal{S}^+_{\Sigma}\oplus \mathcal{S}^-_{\Sigma})=Exp^+\oplus Exp^-$. Instead, we follow the idea in [11], developed by Atiyah, Patodi and Singer, to decompose $L^2(T^2;\mathcal{S}^+_{\Sigma}\oplus \mathcal{S}^-_{\Sigma})$ asymmetrically into the following three parts:
\begin{align}
Exp^+=\bigg\{\left(\begin{array}{c}
\sum_{(l,m)\in\mathbb{Z}^2-0} p_{l,m} e^{ilt}e^{ims}\\
\sum_{(l,m)\in\mathbb{Z}^2-0} -sign(l,m)ip_{l,m} e^{ilt}e^{ims}
\end{array}\right)\bigg| (p_{l,m})\in l^2\bigg\},\\
Exp^-=\bigg\{\left(\begin{array}{c}
\sum_{(l,m)\in\mathbb{Z}^2-0} p_{l,m} e^{ilt}e^{ims}\\
\sum_{(l,m)\in\mathbb{Z}^2-0} sign(l,m)ip_{l,m} e^{ilt}e^{ims}
\end{array}\right)\bigg| (p_{l,m})\in l^2\bigg\}
\end{align}
and $ker(D_{\Sigma})$ where $D_{\Sigma}:=\hat{D}$ is the Dirac operator defined on $T^2$. We also denote $Exp^{+}\oplus ker(D_{\Sigma})$ by $Exp^{+,0}$ and  $Exp^{-}\oplus ker(D_{\Sigma})$ by $Exp^{-,0}$.\\

 By using this decomposition, we obtain the following diagram:
\begin{diagram}
      &    &     Exp^{+,0}  &\\
      &\ruTo^{p^{+,0}}    &    \uTo \mbox{ }\pi^{+,0}   \\
ker(D^+|_{L^2(M-\Sigma;\mathcal{S}^+_{g,\Sigma})})&    \rTo^{\mbox{ } \mbox{ } \mbox{ }B\mbox{ } \mbox{ } \mbox{ }}        &L^2(T^2;\mathcal{S}^+_{\Sigma}\oplus \mathcal{S}^-_{\Sigma}) &\\
     &\rdTo_{p^-}    &   \dTo \mbox{ }\pi^-    &\\
     &    &    Exp^-   & 
\end{diagram}

 One can also define $Exp^-$ in the following alternative way. We can consider $D^+$ defined in (4.1) on an extended domain $T^2\times\mathbb{C}$ with respect to the product metric (which uses the standard Euclidean metric on the second component $\mathbb{C}\cong \mathbb{R}^2$). Denote by $L^2(T^2\times \mathbb{C})$ the space of $L^2$ sections on the corresponding expended spinor bundle. Then $Exp^-$ can be written as
\begin{align*}
Exp^-=\{B(u)|u\in ker(D^+|_{L^2(T^2\times \mathbb{C})}), |u|(x,r,\theta)<ce^{-\delta r} \mbox{ for some }c,\delta>0\}.
\end{align*}

Now, instead of using $D^+$, we also have the following diagram for $ker(D^-|_{L^2})$. Again, $L^2(T^2;\mathcal{S}^+_{\Sigma}\oplus \mathcal{S}^-_{\Sigma})$ can be decomposed into $\mathcal{E}xp^{\pm}$ and $ker(D_{\Sigma})$ in the following way:
\begin{align*}
\mathcal{E}xp^-=\{B(u)|u\in ker(D^-|_{L^2(T^2\times \mathbb{C})}), |u|(x,r,\theta)<ce^{-\delta r} \mbox{ for some }c,\delta>0\},
\end{align*}
$\mathcal{E}xp^+:=(\mathcal{E}xp^+\oplus ker(D_{\Sigma}))^{\perp}$ and $\mathcal{E}xp^{\pm,0}:=\mathcal{E}xp^{\pm}\oplus ker(D_{\Sigma})$.\\

Therefore we have
\begin{diagram}
      &    &     \mathcal{E}xp^{+,0}  &\\
      &\ruTo^{\mathfrak{p}^{+,0}}    &    \uTo \mbox{ }\pi^{+,0}   &\\
ker(D^-|_{L^2(M-\Sigma;\mathcal{S}^-_{g,\Sigma})})&    \rTo^{\mbox{ } \mbox{ } \mbox{ }B\mbox{ } \mbox{ } \mbox{ }}        &L^2(T^2;\mathcal{S}^+_{\Sigma}\oplus \mathcal{S}^-_{\Sigma}) &\\
     &\rdTo_{\mathfrak{p}^-}    &   \dTo \mbox{ }\pi^-    &\\
     &    &    \mathcal{E}xp^-   & 
\end{diagram}
\begin{pro}
The operators $p^-$, $\mathfrak{p}^-$ are Fredholm. The operators $p^{+,0}$, $\mathfrak{p}^{+,0}$ are compact. Moreover, $\mathfrak{p}^{-,0}$, the projection from $ker(D^-|_{L^2(M-\Sigma;\mathcal{S}^-_{g,\Sigma})})$ to $\mathcal{E}xp^{-,0}$, is also a Fredholm operator. 
\end{pro}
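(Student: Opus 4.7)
My approach mirrors the 3-dimensional argument behind Proposition 2.2, with Fourier analysis on $S^1$ replaced by spectral decomposition of $\hat{D}$ on $T^2$, and with the symmetric $Exp^{\pm}$ splitting replaced by the asymmetric APS-type decomposition of (4.6)--(4.7). In outline, I would first pin down the map $B$ explicitly on the model $T^2 \times \mathbb{C}$, then identify $Exp^{\pm}$ with the decay/growth modes on that cylinder, and finally use an APS-type parametrix argument to conclude Fredholmness and compactness of the relevant projections.

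First, I would read off $B$ from the explicit model expansion (4.4). For $v \in ker(D^+|_{L^2})$, the $L^2$ condition at $r = 0$ forces $u^{\pm}_{k,a} = 0$ for $k < 0$, so only the $k = 0$ modes in the $\hat{D}$-eigenvalue decomposition contribute to $B(v)$; each nonzero eigenvalue $a \in \Lambda$ then contributes one complex degree of freedom $(u^+_{0,a}, u^-_{0,a})$, and the $ker(D_\Sigma)$ eigenmodes contribute separately. Next, using the asymptotic $\mathfrak{I}_{p,a}(r) \sim (2\pi a r)^{-1/2} e^{ar}$ at large $r$, I would identify $Exp^-$ with those $(u^+_{0,a}, u^-_{0,a})$ producing exponentially decaying solutions on $T^2 \times \mathbb{C}$; this matches the alternative characterization of $Exp^-$ already given in the paper just before the statement. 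The complementary sign pattern then defines $Exp^+$, and the zero-eigenmode of $\hat{D}$ furnishes the extra $ker(D_\Sigma)$ summand.

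With these identifications in place, Fredholmness of $p^-$ and $\mathfrak{p}^-$ follows by a standard APS-type construction: write $M - \Sigma$ as a compact interior glued to the model end $T^2 \times (0, R)$, assemble a parametrix from the interior Dirac parametrix together with the explicit cylindrical parametrix defined by the decaying-mode boundary condition at $r = 0$, and verify that the remainder is compact via Rellich. Compactness of $p^{+,0}$ and $\mathfrak{p}^{+,0}$ follows because their images lie in a space of modes controlled in any $H^s$-norm by the $L^2$-norm of $v$ on a collar of $\Sigma$ (elliptic regularity away from $\Sigma$), and the Rellich embedding then yields compactness. Finally, $\mathfrak{p}^{-,0}$ is Fredholm because $\mathcal{E}xp^{-,0} = \mathcal{E}xp^- \oplus ker(D_\Sigma)$ enlarges the target of the Fredholm operator $\mathfrak{p}^-$ by the finite-dimensional summand $ker(D_\Sigma)$, which preserves the Fredholm property.

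The main obstacle is the APS parametrix step: one must confirm that the cylindrical Dirac operator with the decaying-mode boundary condition is uniformly well-behaved across the unbounded spectrum $\Lambda$ of $\hat{D}$, so that the infinite direct sum of radial ODE problems (4.3) assembles into a genuine Fredholm boundary-value problem. This is exactly the technical core of the Atiyah--Patodi--Singer framework cited as [11], and the 3-dimensional analog was already carried out in [9]; I would invoke those results rather than rederive them from scratch.
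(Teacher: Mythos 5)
Your proposal follows a genuinely different route from the paper. The paper's proof is elementary and self-contained (modulo a lemma from [8]): it establishes finite-dimensionality of $\ker(p^-)$ via the Lichnerowicz--Weitzenb\"ock formula on $M - N_r$ with $r\to 0$, which gives the boundary term $\sum_{l,m}\sqrt{l^2+m^2}\,|\hat u^-_{l,m}|^2$ and hence an estimate $\|\mathfrak u\|^2_{L^2_1}\le C\|\mathfrak u\|^2_{L^2}$ on $\ker(p^-)$; it establishes finite-dimensionality of $\mathrm{coker}(p^-)$ by an explicit approximate-solution construction (cut off the decaying modes above frequency $N$, correct with Proposition 4.3 of [8], and run a contradiction argument); and it gets compactness of $p^+$ from the spectral bound $\sum_{l,m}\sqrt{l^2+m^2}\,|\hat u^+_{l,m}|^2\le C\|\mathfrak u\|^2_{L^2}$ forced by the exponentially growing modes. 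You instead package all of this into an APS-style Fredholm-pair/parametrix statement and defer the technical core to the Atiyah--Patodi--Singer and Br\"uning--Seeley literature. That is a legitimate and in some ways cleaner strategy, and your closing observation that $\mathfrak p^{-,0}$ inherits Fredholmness from $\mathfrak p^-$ because $\mathcal{E}xp^{-,0}=\mathcal{E}xp^-\oplus\ker(D_\Sigma)$ only enlarges the target by a finite-dimensional summand is a point the paper leaves implicit. The trade-off is that the paper's direct estimates give explicit quantitative control and stay within elementary tools, while your approach gives the result as a corollary of a general machine.

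Two cautionary remarks. First, the end here is a codimension-two cone singularity, not a cylindrical end, so the framework you actually want to invoke is the Br\"uning--Seeley ``regular singular operator'' parametrix (reference [10]), with APS [11] entering only through the spectral projection defining the boundary condition; you should not literally glue to a cylinder as in the classical APS setting, since the radial ODE (4.3) has the $1/r$ coefficient characteristic of a cone. Second, you would still need to verify that the Cauchy-data space $B(\ker(D^+|_{L^2}))$ and the subspace $Exp^{+,0}$ form a Fredholm pair in $L^2(T^2;\mathcal S^+_\Sigma\oplus\mathcal S^-_\Sigma)$; this is precisely what the paper proves by hand and what a parametrix argument establishes, so your sketch is not wrong, but it is the step that needs to be checked uniformly over the unbounded spectrum $\Lambda$ rather than assumed. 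Your claim that the image of $p^{+,0}$ is controlled in every $H^s$ is actually stronger than the paper's $H^{1/2}$-type bound and is correct (the exponentially growing radial factor forces super-polynomial decay of the Fourier coefficients), but it is not phrased precisely: the control comes from the $L^2$-integrability constraint across the singularity, not merely from ``elliptic regularity away from $\Sigma$.''
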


\begin{proof}
Here we just prove that $p^-$ is Fredholm and $p^{+,0}$ is compact because other cases can be obtained by the same argument.\\

Firstly, we prove $p^-$ is Fredholm. This is equivalently to say that $p^-$ has finite dimensional kernel and finite dimensional cokernel. By the computation in Section 4.2, for any $\mathfrak{u}\in ker(D^+|_{L^2})$, we have
\begin{align*}
\mathfrak{u}=\sum_{l,m}e^{ilt}e^{ims}&\left( \begin{array}{c}
\hat{u}_{l,m}^{+}\frac{e^{\sqrt{l^2+m^2}r}}{\sqrt{z}}+\hat{u}_{l,m}^{-}\frac{e^{-\sqrt{l^2+m^2}r}}{\sqrt{z}}\\
-\mbox{sign}(l,m)i\hat{u}^{+}_{l,m}\frac{e^{\sqrt{l^2+m^2}r}}{\sqrt{\bar{z}}}+\mbox{sign}(l,m)i\hat{u}^{-}_{l,m}\frac{e^{-\sqrt{l^2+m^2}r}}{\sqrt{\bar{z}}}
\end{array} \right)\\
&\mbox{ }\mbox{ }\mbox{ }\mbox{ }\mbox{ }\mbox{ }\mbox{ }\mbox{ }\mbox{ }\mbox{ }\mbox{ }\mbox{ }\mbox{ }\mbox{ }\mbox{ }\mbox{ }\mbox{ }\mbox{ }\mbox{ }\mbox{ }\mbox{ }\mbox{ }\mbox{ }\mbox{ }\mbox{ }\mbox{ }\mbox{ }\mbox{ }\mbox{ }\mbox{ }\mbox{ }\mbox{ }\mbox{ }\mbox{ }\mbox{ }\mbox{ }\mbox{ }\mbox{ }\mbox{ }\mbox{ }\mbox{ }\mbox{ }\mbox{ }\mbox{ }\mbox{ }\mbox{ }\mbox{ }\mbox{ }\mbox{ }\mbox{ }\mbox{ }\mbox{ }\mbox{ }\mbox{ }\mbox{ }\mbox{ }+\mbox{ higher order terms}.
\end{align*}
Let $N_r$ be the tubular neighborhood of $\Sigma$ with thickness $r$. By using Lichnerowicz-Weizenb\"ock formula,
\begin{align*}
\int_{M-N_r}|D^+\mathfrak{u}|^2=\int_{M-N_r}|\nabla \mathfrak{u}|^2+\int_{M-N_r}\langle \mathcal{R}\mathfrak{u},\mathfrak{u} \rangle+\int_{\partial N_r}\langle\mathfrak{u},\partial_r\mathfrak{u}\rangle i_{\partial_r}dVol,
\end{align*}
and taking $r\rightarrow 0$, we have
\begin{align*}
\|\mathfrak{u}\|^2_{L^2_1}\leq \sum_{l,m} \sqrt{l^2+m^2}|\hat{u}_{l,m}^{-}|^2+C\|\mathfrak{u}\|^2_{L^2}.
\end{align*}
Therefore, if $\mathfrak{u}\in ker(p^-)$, then we have $\hat{u}_{l,m}^{-}=0$ for all $l,m$. So
\begin{align*}
\|\mathfrak{u}\|^2_{L^2_1}\leq C\|\mathfrak{u}\|^2_{L^2},
\end{align*}
which implies that kernel $p^-$ is finite dimensional.\\

To prove the cokernel is finite-dimensional, we claim that there exists $N>0$ such that $range(p^-)+\mathbb{V}_N=Exp^-$, where 
\begin{align*}
\mathbb{V}_N=\Big\{\sum_{l,m}(\hat{u}^-_{l,m},-sign(l,m)i\hat{u}^-_{l,m})e^{ilt}e^{ims}\Big|\hat{u}^-_{l,m}=0\mbox{ for all } l^2+m^2>N\Big\}.
\end{align*}
We can easily see that if this claim is true, then the $coker(p^-)$ will be finite dimensional.\\

To prove this claim, we need to prove the following statement first: There exists $N>0$ with the following significance. For any 
\begin{align*}
V=\sum_{l,m}(\hat{u}^-_{l,m},-sign(l,m)i\hat{u}^-_{l,m})e^{ilt}e^{ims}
\end{align*}
with $\hat{u}^-_{l,m}=0$ for all $l^2+m^2<N$, there exists $\mathfrak{u}\in ker(D^+)$ satisfying $\|B(\mathfrak{u})-V\|\leq \frac{1}{3}\|V\|^2$.\\

Here we prove this statement by using a proposition in [8]. We choose
\begin{align*}
\mathfrak{u}_0=\chi(r) \sum_{l^2+m^2>N}e^{ilt}e^{ims}&\left( \begin{array}{c}
\hat{u}_{l,m}^{-}\frac{e^{-\sqrt{l^2+m^2}r}}{\sqrt{z}}\\
\mbox{sign}(l,m)i\hat{u}^{-}_{l,m}\frac{e^{-\sqrt{l^2+m^2}r}}{\sqrt{\bar{z}}}
\end{array} \right),\\
D^+(\mathfrak{u}_0):=\mathfrak{f},\\
B(\mathfrak{u}_0):=V.
\end{align*}
Here $\chi$ is a nonnegative, decreasing function with $\chi(0)=1$, $\chi=0$ on $M-N_R$ for some small $R$. Clearly we have $\|\mathfrak{f}\|_{L^2}^2 \leq C_Re^{-NR}\|V\|^2$. By Proposition 4.3 in [8], there exists $\mathfrak{v}$ such that $D^+(\mathfrak{v})=\mathfrak{f}$ and $\|B(\mathfrak{v})\|^2\leq  C_Re^{-NR}\|V\|^2$. So by taking $\mathfrak{u}=\mathfrak{u}_0-\mathfrak{v}$ and $N$ sufficiently large, we have $D^+\mathfrak{u}=0$ and $\|B(\mathfrak{u})-V\|\leq \frac{1}{3}\|V\|^2$.\\

Now, we prove the claim by using the statement we just proved. Suppose the claim is false, then there exists a non-zero $Y \perp range(p^-)+\mathbb{V}_N$. Suppose $\|Y\|=1$ without loss of generality. Then for any $Z=\sum_{l,m}(\hat{u}^-_{l,m},-sign(l,m)i\hat{u}^-_{l,m})e^{ilt}e^{ims}$, we have
\begin{align*}
|\langle Y, Z\rangle| = &|\langle Y, \sum_{l^2+m^2\leq N}(\hat{u}^-_{l,m},-sign(l,m)i\hat{u}^-_{l,m})e^{ilt}e^{ims}\rangle \\
&+ \langle Y, \sum_{l^2+m^2> N}(\hat{u}^-_{l,m},-sign(l,m)i\hat{u}^-_{l,m})e^{ilt}e^{ims}\rangle| \\
= &|\langle Y, \sum_{l^2+m^2> N}(\hat{u}^-_{l,m},-sign(l,m)i\hat{u}^-_{l,m})e^{ilt}e^{ims}\rangle|\\
\leq & \frac{1}{3}\|\sum_{l^2+m^2> N}(\hat{u}^-_{l,m},-sign(l,m)i\hat{u}^-_{l,m})e^{ilt}e^{ims}\|\\
\leq & \frac{1}{3}\|Z\|.
\end{align*}
However, the $\sup_{\|Y\|=1}|\langle Y, Z\rangle|=\|Z\|$, this leads a contradiction. So $coker(p^-)$ is finite dimensional.\\

Secondly, we have to prove $p^{+,0}$ is compact. Since $ker(D_{\Sigma})$ is finite dimensional, so $p^{+,0}$ is compact if and only if $p^+$ is compact. To prove that $p^+$ is compact, notice that if the coefficients of $\mathfrak{u}$ is in $Exp^+$, then $\mathfrak{u}$ will have exponential increasing Fourier mode. So
\begin{align*}
\sum_{l,m}\sqrt{l^2+m^2}|\hat{u}^+_{l,m}|^2\leq C\|\mathfrak{u}\|^2_{L^2}.
\end{align*}
This inequality implies that: Any converging sequence $\{\mathfrak{u}^k\}$ in $L^2$ will provide a subsequence in $\{p^+(\mathfrak{u}^k)\}$ converging strongly in $l^2$. So $p^+$ is a compact operator.
\end{proof}
By Proposition 4.2 and (4.6), one can check that $F_{T^2,p}$ is Fredholm if and only if $\mathcal{T}_{d^{\pm}}$ is Fredholm. Throughout this paper, we assume the following assumption for $F_{T^2,p}$ in the four-dimensional case:\\

{\bf Assumption}: $\mathcal{T}_{d^{\pm}}$ is Fredholm for any $d^{\pm}$ satisfying $|d^+|^2+|d^-|^2>0$.\\

One can regard this assumption as the 4-dimensional version of Proposition 2.6. So it is conceivable that this assumption is true if we believe that the moduli space of $\mathbb{Z}/2$-harmonic spinors has Kuranishi structure in dimension 4. However, this problem remains open now.\\

The following index theorem is given by Fangyun Yang in \cite[Theorem 1.0.3]{I}.
\begin{theorem}
$dim(ker(p^-))-dim(ker(\mathfrak{p}^{-,0}))=\int \hat{A}(M)+\frac{1}{2}dim(ker(D_{\Sigma}))$.
\end{theorem}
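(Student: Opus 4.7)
My plan is to realize the left-hand side as the index of a Fredholm realization of $D^+$ on a blow-up of $M$ along $\Sigma$, and then evaluate this index via the Atiyah--Patodi--Singer index theorem (or the Bruning--Seeley variant tailored to first-order conical singularities). The overall strategy exploits the fact that the $\mathbb{Z}/2$ twist at $\Sigma$, combined with the product structure on the tubular neighborhood $T^2\times\mathbb{D}_R$, turns the Dirac operator into a standard cylinder-end operator after the change of variables $\tau=-\log r$ and absorbing the $\sqrt{z}$, $\sqrt{\bar{z}}$ singular factors by a conformal rescaling of the spinor.

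First I would cut out a thin tubular neighborhood $N_r$ of $\Sigma$ and work on the compact manifold with boundary $M_r = M\setminus N_r$, whose boundary is $T^2\times S^1_r$. Under the change of variable $\tau=-\log r$, the radial ODE (4.3) becomes a constant-coefficient system of the form $\tfrac{d}{d\tau} = \sigma A$, where $A$ is a self-adjoint elliptic operator on the boundary whose eigenmodes are in explicit bijection with the decomposition (4.4). The positive and negative spectral subspaces of $A$ correspond precisely to $Exp^+$ and $Exp^-$ (modulo the sign normalizations in (4.7) and (4.8)), and the zero eigenspace equals $\ker D_\Sigma$ (it arises from the angular modes $k=0$ paired with $\ker\hat{D}$ on $T^2$). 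With APS boundary conditions $P_{\geq 0}(A)$ imposed at $\partial M_r$, the operator $D^+$ becomes Fredholm; its kernel is naturally isomorphic to $\ker(p^-)$, and via the $L^2$-pairing between $D^+$ and $D^-$ together with Proposition 4.2, its cokernel is naturally isomorphic to $\ker(\mathfrak{p}^{-,0})$.

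Once these identifications are in place, the APS index theorem gives
\[
\dim \ker(p^-) - \dim \ker(\mathfrak{p}^{-,0}) \;=\; \int_M \hat{A}(M) \;-\; \tfrac{1}{2}\bigl(\eta_A(0)+\dim \ker A\bigr).
\]
The eta invariant $\eta_A(0)$ vanishes by a spectral symmetry: Clifford multiplication by the volume form on $T^2\times S^1$ anticommutes with $A$, sending eigenvalue $\lambda$ to $-\lambda$. The sign of the $\dim\ker A$ term flips to $+\tfrac{1}{2}$ due to the asymmetric choice $Exp^-$ versus $\mathcal{E}xp^{-,0}$ in the statement: the $\ker D_\Sigma$ contribution is lumped into the $D^-$ side but excluded from the $D^+$ side, so transposing it across the equation converts $-\tfrac{1}{2}\dim\ker D_\Sigma$ into $+\tfrac{1}{2}\dim\ker D_\Sigma$, yielding the claimed formula.

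The main obstacle is the careful treatment of the $\ker D_\Sigma$ modes, which are precisely the zero modes of the boundary operator $A$ and therefore straddle the APS spectral dichotomy. They correspond to the radial behavior $r^{\pm 1/2}$, neither of whose branches is $L^2$ near $r=0$ on its own, so their contribution to the index must be tracked using the extended $L^2$ framework of Atiyah--Patodi--Singer, or equivalently the Bruning--Seeley cone-singular index formula. This is the genuine source of the $\tfrac{1}{2}$ factor, and keeping straight both the signs and the side of the equation on which the $\ker D_\Sigma$ term lives is the most delicate accounting in the argument.
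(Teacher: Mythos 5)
The paper does not prove this statement: Theorem 4.3 is quoted verbatim from Fangyun Yang's thesis [I, Theorem 1.0.3], which in turn builds on Bruning--Seeley [J]. The text surrounding Theorem 4.3 supplies no argument at all, so there is no "paper's own proof" to compare against; what can be assessed is whether your outline would plausibly reconstruct the cited result.

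Your general strategy (excise a tubular neighborhood of $\Sigma$, rewrite $D^+$ near the edge as a first-order model operator in a radial variable, impose spectral boundary conditions, and appeal to an APS/Bruning--Seeley-type formula whose boundary correction produces the $\tfrac12\dim\ker D_\Sigma$) is indeed the right family of ideas and matches the sources the paper points to. But two specific steps in your sketch would not survive scrutiny. First, the vanishing of the eta term is not for the reason you give: on the odd-dimensional boundary $T^2\times S^1$ the complex volume element is \emph{central} in the even Clifford algebra and therefore \emph{commutes} with Dirac-type operators rather than anticommuting, so it cannot implement $\lambda\mapsto-\lambda$. The eta invariant of the relevant twisted Dirac operator on the flat $T^2\times S^1$ does vanish, but this comes from the flat-torus spectral symmetry (eigenvalues come in $\pm$ pairs because of an antiunitary involution), not from Clifford volume-form anticommutation. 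Second, after $\tau=-\log r$ the metric is $d\tau^2 + e^{-2\tau}d\theta^2 + g_{T^2}$: the $\theta$-circle shrinks, so one does \emph{not} get a product cylinder with a $\tau$-independent boundary operator, and the classical APS theorem does not directly apply. This is exactly the cone/edge phenomenon Bruning--Seeley handle, and it is the source of the extra curvature term $\int\hat A(\Sigma)\frac{1-\cosh(e/2)}{\sinh(e/2)}$ in Yang's general formula (which vanishes here only because the normal bundle is trivial). You gesture at Bruning--Seeley as an "equivalent" framework, but the reduction from the edge problem to an APS cylinder is not a change of variables; it is the hard analytic content, and your sketch does not actually carry it out. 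Finally, the remark that the sign of $\tfrac12\dim\ker A$ "flips due to the asymmetric choice" is post-hoc bookkeeping rather than a derivation: in the $\mathcal{E}xp^{-,0}$ versus $Exp^-$ dichotomy the $\ker D_\Sigma$ modes must be traced through the extended-$L^2$ framework carefully, and just transposing a term across the equals sign is not a substitute.
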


In fact, Fangyun Yang gave a more general version of this index therorem for the $2n$-dimensional manifolds with a embedding codimension 2 submanifold $\Sigma$ . She proved the following formula.
\begin{align*}
dim(ker(p^-))-dim(ker(\mathfrak{p}^{-,0}))=\int \hat{A}(M)+\int \hat{A}(\Sigma)\frac{1-\cosh(\frac{e}{2})}{\sinh(\frac{e}{2})}+\frac{1}{2}dim(ker(D_{\Sigma}))
\end{align*}
where $e$ is the Euler class of the normal bundle of $\Sigma$. However, since the normal bundle of $\Sigma$ is trivial, the middle term will vanish.

\begin{theorem}
Suppose that $\mathcal{T}_{d^{\pm}}|_{Exp^-}$ is Fredholm provided $|d^+|^2+|d^-|^2>0$, then $dim(\mathbb{K}_0)-dim(\mathbb{K}_1)=\int \hat{A}(M).$
\end{theorem}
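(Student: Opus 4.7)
The plan is to parallel the reduction of Theorem 1.2 to Proposition 2.7 carried out in Sections 2--3, then substitute Yang's index theorem (Theorem 4.3) for Proposition 2.7. From the definition (4.6) of $F_{T^2,p}$,
\[
\dim(\mathbb{K}_0)-\dim(\mathbb{K}_1) = \mathrm{index}(F_{T^2,p}) = \mathrm{index}(\mathcal{T}_{d^{\pm}}\circ B)-\dim(\ker(D^-|_{L^2_1})),
\]
and the decomposition $B=p^-+p^{+,0}$ of Proposition 4.2, together with compactness of $p^{+,0}$, gives
\[
\mathrm{index}(\mathcal{T}_{d^{\pm}}\circ B) = \mathrm{index}(\mathcal{T}_{d^{\pm}}|_{Exp^-})+\mathrm{index}(p^-).
\]

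The 4-dimensional analogue of Proposition 2.6, $\mathrm{index}(\mathcal{T}_{d^{\pm}}|_{Exp^-})=0$, I would establish by homotopy invariance. The standing assumption makes $\mathcal{T}_{d^{\pm}}|_{Exp^-}$ Fredholm throughout the connected open set $\{|d^+|^2+|d^-|^2>0\}$, so its index is constant; evaluating at a constant pair $(d^+,d^-)$, where the operator decouples under the joint eigenmode decomposition of $\hat D$ and the Fourier expansion in $(t,s)$, should exhibit a balanced kernel and cokernel mode by mode.

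The main technical step is to replicate the Lagrangian argument of Section 3 in 4D. The integration-by-parts identity analogous to (3.3), pairing $D^+$ against $D^-$ across the Clifford symbol $e_0=cl(\partial_r)$, combined with Proposition 4.1a), should yield a Lagrangian decomposition of $L^2(T^2;\mathcal{S}^+_\Sigma\oplus\mathcal{S}^-_\Sigma)$ modulo $\ker(D_\Sigma)$, with $B(\ker(D^+|_{L^2}))$ and $e_0 B(\ker(D^-|_{L^2}))$ as complementary halves. Mimicking the cokernel computation at the end of Section 3, and using $\ker(D^-|_{L^2_1})\subset\ker(\mathfrak{p}^{-,0})$ from Proposition 4.1c), this should produce
\[
\dim(\mathrm{coker}(p^-))+\dim(\ker(D^-|_{L^2_1})) = \dim(\ker(\mathfrak{p}^{-,0}))+\tfrac12\dim(\ker(D_\Sigma)),
\]
the $\tfrac12$ reflecting that $\ker(D_\Sigma)$-modes sit diagonally across both halves of the asymmetric splitting.

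Substituting into Theorem 4.3 produces
\[
\mathrm{index}(p^-)-\dim(\ker(D^-|_{L^2_1})) = \bigl(\dim\ker(p^-)-\dim\ker(\mathfrak{p}^{-,0})\bigr)-\tfrac12\dim\ker(D_\Sigma) = \int\hat{A}(M),
\]
which combined with vanishing of $\mathrm{index}(\mathcal{T}_{d^\pm}|_{Exp^-})$ yields $\dim(\mathbb{K}_0)-\dim(\mathbb{K}_1)=\int\hat{A}(M)$. The principal obstacle is the Lagrangian identification for $\mathrm{coker}(p^-)$: unlike the 3D setting, one pairs solutions of two different operators ($D^+$ against $D^-$) across the normal Clifford multiplication and must carefully track the diagonal contribution of $\ker(D_\Sigma)$-modes so that exactly half a dimension per mode is produced, matching the $\tfrac12\dim\ker(D_\Sigma)$ appearing in Yang's formula. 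A secondary difficulty is establishing $\mathrm{index}(\mathcal{T}_{d^\pm}|_{Exp^-})=0$ without an off-the-shelf 4-dimensional analogue of Proposition 2.6; the homotopy argument above is designed to address this.
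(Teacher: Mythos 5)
Your overall reduction is the same as the paper's: rewrite $\dim\mathbb{K}_0-\dim\mathbb{K}_1$ as $\mathrm{index}(\mathcal{T}_{d^\pm}\circ B)-\dim\ker(D^-|_{L^2_1})$, split $B=p^-+p^{+,0}$ using Proposition 4.2, compute $\mathrm{index}(\mathcal{T}_{d^\pm}|_{Exp^-})$ by homotoping to a constant $d^\pm$, identify $\mathrm{coker}(p^-)$ through the 4-dimensional Lagrangian decomposition (4.9), and feed the result into Yang's Theorem 4.3. That skeleton is correct.

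However, you have a genuine error in one step, compensated by a second error in the opposite direction, so the final answer is right for the wrong reason. You assert that $\mathrm{index}(\mathcal{T}_{d^\pm}|_{Exp^-})=0$ by evaluating at a constant $(d^+,d^-)$ and observing mode-by-mode cancellation. That fails: the space $Exp^-$ (and $Exp^+$) is built from Fourier modes $(l,m)\in\mathbb{Z}^2\setminus\{0\}$, so the $(0,0)$-modes --- which span $\ker(D_\Sigma)$ --- are missing from the domain. With, say, $d^+=0$ and $d^-$ constant, the map $\mathcal{T}_{d^\pm}|_{Exp^-}$ is injective but its image misses exactly $\bar d^-\otimes\Pi^+(\ker D_\Sigma)$, where $\Pi^+$ projects onto $\mathcal{S}^+_\Sigma$. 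The result (Proposition 4.5 in the paper) is
\begin{align*}
\mathrm{index}(\mathcal{T}_{d^\pm}|_{Exp^-})=-\tfrac{1}{2}\dim\ker(D_\Sigma),
\end{align*}
not zero. Your heuristic that the $\ker(D_\Sigma)$-modes balance "mode by mode" is precisely what breaks here.

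Correspondingly, your cokernel identity $\dim\mathrm{coker}(p^-)+\dim\ker(D^-|_{L^2_1})=\dim\ker(\mathfrak{p}^{-,0})+\tfrac12\dim\ker(D_\Sigma)$ is off by the same $\tfrac12\dim\ker(D_\Sigma)$. The paper's Lagrangian computation gives the cleaner statement
\begin{align*}
\mathrm{coker}(p^-)\cong\ker(\mathfrak{p}^{-,0})/\ker(D^-|_{L^2_1}),
\end{align*}
with no extra $\ker(D_\Sigma)$ term. The half-dimensional contribution of $\ker(D_\Sigma)$ already appears on the right-hand side of Yang's Theorem 4.3, and it cancels against Proposition 4.5; you have effectively slid that contribution from the index of $\mathcal{T}_{d^\pm}|_{Exp^-}$ over into the cokernel formula. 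The two misplacements cancel in your final arithmetic, so the statement you prove is correct, but neither intermediate claim is, and the argument would not survive a check of either step in isolation (for instance, any case with $\ker(D_\Sigma)\neq 0$, which includes $T^2$ with the trivial spin structure).
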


Under the assumption of Theorem 4.4, we also have the following proposition.
\begin{pro}
Suppose that $\mathcal{T}_{d^{\pm}}|_{Exp^-}$ is Fredholm and $index(\mathcal{T}_{d^{\pm}}|_{Exp^-})= constant$ for all $d^{\pm}$ satisfying $|d^+|^2+|d^-|^2>0$. Then 
\begin{align*}
index(\mathcal{T}_{d^{\pm}}|_{Exp^-})=-\frac{1}{2}dim(ker(D_{\Sigma})).
\end{align*}
\end{pro}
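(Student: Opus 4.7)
The hypothesis that $\text{index}(\mathcal{T}_{d^{\pm}}|_{Exp^-})$ is constant on the open set $\{(d^+,d^-):|d^+|^2+|d^-|^2>0\}$ is what makes this tractable: it reduces the problem to an explicit computation for a single convenient choice of $(d^+,d^-)$. My plan is to pick a $d^{\pm}$ that collapses $\mathcal{T}_{d^{\pm}}$ to (a bundle isomorphism composed with) the $c^-$-coordinate projection on $Exp^-$, and then read off the kernel and cokernel directly from the Fourier description (4.8).

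Specifically, I take $d^-\equiv 0$ and choose $d^+$ to be any nowhere-vanishing smooth section of $\mathcal{S}^+_{\Sigma}$. Such a section exists for every spin structure on $T^2$ because each half-spinor bundle on $T^2$ is a topologically trivial complex line bundle. With this choice $|d^+|^2+|d^-|^2>0$, and the definition of $\mathcal{T}_{d^{\pm}}$ in Section 4.2 reduces to $\mathcal{T}_{d^{\pm}}(c^+,c^-)=-d^+\otimes\bar c^-$. Tensoring with the nowhere-vanishing $d^+$ is a bundle isomorphism $\overline{\mathcal{S}^-_{\Sigma}}\to \mathcal{S}^+_{\Sigma}\otimes\overline{\mathcal{S}^-_{\Sigma}}$, so the kernel and cokernel of $\mathcal{T}_{d^{\pm}}|_{Exp^-}$ agree with those of the map $\Phi:Exp^-\to L^2(T^2;\overline{\mathcal{S}^-_{\Sigma}})$ sending $(c^+,c^-)$ to $\bar c^-$.

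By (4.8), a general element of $Exp^-$ is parametrized by $(p_{l,m})\in\ell^2$ supported on the (possibly half-integer shifted) Fourier lattice $L$, which excludes the origin whenever the spin structure is non-trivial, with $c^-=\sum \mbox{sign}(l,m)\,i\,p_{l,m}\,e^{ilt}e^{ims}$. Since $\mbox{sign}(l,m)\neq 0$ whenever $(l,m)\neq 0$, the assignment $(p_{l,m})\mapsto c^-$ is an isomorphism from $Exp^-$ onto the $L^2$-orthogonal complement of $\ker(D_{\Sigma}|_{\mathcal{S}^-_{\Sigma}})$ inside $L^2(T^2;\mathcal{S}^-_{\Sigma})$; recall that on the flat $T^2$ the zero modes of $D_{\Sigma}$ are exactly the constant Fourier modes, which appear only for the trivial spin structure. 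Consequently $\ker\Phi=0$ and $\mathrm{coker}(\Phi)\cong\overline{\ker(D_{\Sigma}|_{\mathcal{S}^-_{\Sigma}})}$. Transporting back through the $d^+$-isomorphism and using that $\dim\ker(D_{\Sigma}|_{\mathcal{S}^+_{\Sigma}})=\dim\ker(D_{\Sigma}|_{\mathcal{S}^-_{\Sigma}})$ on the flat $T^2$, I obtain
\begin{align*}
\text{index}(\mathcal{T}_{d^{\pm}}|_{Exp^-})\;=\;0-\dim\ker(D_{\Sigma}|_{\mathcal{S}^-_{\Sigma}})\;=\;-\tfrac{1}{2}\dim\ker(D_{\Sigma}).
\end{align*}

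The main points that need verification are the two structural inputs: (i) the existence of a nowhere-vanishing $d^+\in\Gamma(\mathcal{S}^+_{\Sigma})$ for every spin structure on $T^2$, which follows from $c_1(\mathcal{S}^{\pm}_{\Sigma})=0$; and (ii) the chiral identity $\dim\ker(D_{\Sigma}|_{\mathcal{S}^+_{\Sigma}})=\dim\ker(D_{\Sigma}|_{\mathcal{S}^-_{\Sigma}})$ on the flat spin $T^2$, which holds because both kernels are spanned by the constant sections for the trivial spin structure and both vanish for the three non-trivial ones. Once these are noted, the argument is just the Fourier calculation above, justified by the constant-index hypothesis.
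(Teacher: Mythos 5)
Your proof is correct and follows essentially the same route as the paper: use the constant-index hypothesis to reduce to a single explicit choice, then read off kernel and cokernel from the Fourier description of $Exp^-$, using the chiral symmetry $\dim\ker(D_{\Sigma}|_{\mathcal{S}^+_{\Sigma}})=\dim\ker(D_{\Sigma}|_{\mathcal{S}^-_{\Sigma}})$ on $T^2$. The only (inessential) difference is that you set $d^-\equiv 0$ with $d^+$ nowhere vanishing, whereas the paper makes the mirror choice $d^+=0$ with $d^-$ nowhere vanishing (written as $1$, $e^{it/2}$, etc.\ depending on the spin structure).
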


\begin{proof}
By taking $d^+=0, d^-=1$ (or $d^-=e^{i\frac{1}{2}t}$, $d^-=e^{i\frac{1}{2}s}$, $d^-=e^{i\frac{1}{2}t}e^{i\frac{1}{2}s}$ according to $\mathcal{S}^{\pm}_{\Sigma}$), we have
\begin{align*}
\mathcal{T}_{d^{\pm}}|_{Exp^-}(c)=\bar{d}^-\otimes c
\end{align*}
for any $c\in Exp^-$. So it is clearly to see that $ker(\mathcal{T}_{d^{\pm}}|_{Exp^-})=0$. Meanwhile, the cokernel of $\mathcal{T}_{d^{\pm}}|_{Exp^-}$ will be $\bar{d}^-\otimes \Pi^+(ker(D_{\Sigma}))$, where $\Pi^+$ is the projection from $\mathcal{S}_{\Sigma}$ to $\mathcal{S}_{\Sigma}^+$. Therefore we have
\begin{align*}
index(\mathcal{T}_{d^{\pm}}|_{Exp^-})= 0-dim(\Pi^+(ker(D_{\Sigma})))=-\frac{1}{2}dim(ker(D_{\Sigma})).
\end{align*}
\end{proof}

\subsection{Index theorem for $\mathfrak{M}_{T^2,g}$}
With all information above, we are ready to prove Theorem 4.4 now. Firstly, notice that
\begin{align*}
dim(\mathbb{K}_0)-dim(\mathbb{K}_1)=index(\mathcal{T}_{d^{\pm}}\circ B)-dim(ker(D^-|_{L^2_1})).
\end{align*}
So to prove Theorem 4.4, we have to show the following proposition is true. 
\begin{pro}
 $index(\mathcal{T}_{d^{\pm}}\circ B)=\int_M \hat{A}(M)+dim(ker(D^-|_{L^2_1}))$. 
\end{pro}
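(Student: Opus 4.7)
The plan is to mimic the proof of Proposition 2.7 in Section 3, with Yang's theorem (Theorem 4.3) supplying the bulk contribution in place of the simple 3-dimensional index evaluation.

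First I would use the decomposition $L^2(T^2;\mathcal{S}^+_\Sigma\oplus\mathcal{S}^-_\Sigma)=Exp^-\oplus Exp^{+,0}$ to write $B=p^-+p^{+,0}$. Since $p^{+,0}$ is compact by Proposition 4.2,
\[
\mathrm{index}(\mathcal{T}_{d^{\pm}}\circ B)=\mathrm{index}(\mathcal{T}_{d^{\pm}}|_{Exp^-})+\mathrm{index}(p^-),
\]
and Proposition 4.5 supplies $\mathrm{index}(\mathcal{T}_{d^{\pm}}|_{Exp^-})=-\tfrac{1}{2}\dim\ker(D_\Sigma)$. It therefore suffices to show
\[
\mathrm{index}(p^-)=\int_M\hat{A}(M)+\tfrac{1}{2}\dim\ker(D_\Sigma)+\dim\ker(D^-|_{L^2_1}).
\]

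Next I would invoke Yang's theorem to evaluate
\[
\dim\ker(p^-)-\dim\ker(\mathfrak{p}^{-,0})=\int_M\hat{A}(M)+\tfrac{1}{2}\dim\ker(D_\Sigma),
\]
which reduces the task to establishing the identification
\[
\dim\mathrm{coker}(p^-)=\dim\ker(\mathfrak{p}^{-,0})-\dim\ker(D^-|_{L^2_1}).
\]

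This last identity is the four-dimensional counterpart of the argument at the end of Section 3 and is where the main work lies. I would proceed as in Section 3: first extend $B$ to an enlarged domain $E_\partial\subset\mathrm{Dom}(D^+)$ along the lines of Definition 3.1, then exploit the fact that $D^+$ and $D^-$ are formal adjoints on $M-\Sigma$ to derive an integration-by-parts identity pairing $B(v)$ with $B(w)$ for $v\in\ker(D^+|_{L^2})$ and $w\in\ker(D^-|_{L^2})$ via a bilinear form $\mathscr{B}$ built from Clifford multiplication by $\partial_r$. The goal is the four-dimensional analog of Proposition 3.2, namely that $B(\ker(D^-|_{L^2}))$ is, up to the contribution of $\ker(D_\Sigma)$, the $\mathscr{B}$-orthogonal complement of $B(\ker(D^+|_{L^2}))$. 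Once this is in hand, the linear-algebraic manipulations of Section 3 should transfer verbatim, identifying $\mathrm{coker}(p^-)$ with $B(\ker(D^-|_{L^2}))\cap\mathcal{E}xp^+$; the restriction of $\mathfrak{p}^{+,0}$ to $\ker(\mathfrak{p}^{-,0})$ should surject onto this intersection with kernel $\ker(B)\cap\ker(D^-|_{L^2})=\ker(D^-|_{L^2_1})$, where Proposition 4.1(c) ensures that elements of $\ker(D^-|_{L^2})$ with vanishing leading coefficient automatically lie in $L^2_1$.

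The main obstacle is precisely this Lagrangian/integration-by-parts step: the asymmetric three-piece decomposition $Exp^{+}\oplus Exp^{-}\oplus\ker(D_\Sigma)$ forces one to track the tangential zero modes $\ker(D_\Sigma)$ carefully, and the pairing $\mathscr{B}$ is non-degenerate only modulo these modes, which is exactly what produces the shift by $\tfrac{1}{2}\dim\ker(D_\Sigma)$ appearing in Yang's formula. Once the non-degeneracy modulo $\ker(D_\Sigma)$ is verified, assembling the three steps yields $\mathrm{index}(\mathcal{T}_{d^{\pm}}\circ B)=\int_M\hat{A}(M)+\dim\ker(D^-|_{L^2_1})$ as claimed.
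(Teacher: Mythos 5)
Your proposal is correct and follows essentially the same route as the paper: decompose $\mathrm{index}(\mathcal{T}_{d^{\pm}}\circ B)=\mathrm{index}(\mathcal{T}_{d^{\pm}}|_{Exp^-})+\mathrm{index}(p^-)$ via compactness of $p^{+,0}$, evaluate the first term by Proposition 4.5, and evaluate $\mathrm{index}(p^-)$ by combining Yang's Theorem 4.3 with the integration-by-parts Lagrangian decomposition $L^2(T^2;\mathcal{S}^+_{\Sigma}\oplus\mathcal{S}^-_{\Sigma})\cong B(\ker D^+|_{L^2})\oplus e_0 B(\ker D^-|_{L^2})\oplus\ker(D_{\Sigma})$, which yields $\mathrm{coker}(p^-)\cong\ker(\mathfrak{p}^{-,0})/\ker(D^-|_{L^2_1})$ exactly as you predict. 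One small imprecision worth noting: the bilinear pairing $\mathscr{B}$ is in fact non-degenerate on $\ker(D_{\Sigma})$ as well (since $e_0$ preserves it isomorphically), so the $\tfrac{1}{2}\dim\ker(D_{\Sigma})$ term comes from Yang's spectral-flow formula itself rather than from any degeneracy of the pairing; in the paper $\ker(D_{\Sigma})$ simply appears as a third summand in the Lagrangian-type splitting.
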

To begin with, we have to define the 4-dimensional version of $E_\partial$ space.
\begin{definition}
Let 
\begin{align*}
E_{\partial}=\{v\in Dom(D^+)| r^{\frac{1}{2}}v(r,\cdot)\rightharpoonup Z\in (L^2(\mathcal{S}^+_{\Sigma})\otimes e^{-\frac{1}{2}i\theta})\oplus (L^2(\mathcal{S}^-_{\Sigma})&\otimes e^{\frac{1}{2}i\theta})\\
&\mbox{ as } r\rightarrow 0\}.
\end{align*}
and define $\partial(v)=Z$ when the limit exists.
\end{definition}

So for any $v\in Dom(D^+)$, $w\in Dom(D^-)$, we have
\begin{align*}
\int_{M-\Sigma} \langle D^+v,w \rangle+\langle v, D^-w\rangle=2\pi \int_{T^2}\langle \partial(v), e_*\partial(w)\rangle=2\pi \int_{T^2} \langle B(v), e_0 B(w)\rangle.
\end{align*}
It is also easy to check that
\begin{align}
e_0: Exp^{\pm}\rightarrow \mathcal{E}xp^{\mp},\\
e_0: ker(D_{\Sigma})\rightarrow ker(D_{\Sigma})\nonumber
\end{align}
are isomorphisms. Then we have the following 4-dimensional version of Proposition 3.2
\begin{align}
L^2(T^2;S^+\oplus S^-)\cong B(ker(D^+|_{L^2}))\oplus e_0 B(ker(D^-|_{L^2}))\oplus ker(D_{\Sigma})
\end{align}

Now we can prove Proposition 4.6. By using the same argument as we did for 3-dimensional case,
\begin{align*}
coker(p^-)&=p^-(ker(D^+|_{L^2}))=[\pi^-\circ B(ker(D^+|_{L^2}))]^{\perp}\\
          &=[e_0B(ker(D^-|_{L^2}))\oplus ker(D_{\Sigma})]\cap ker(\pi^{+,0})\\
          &=[B(ker(D^-|_{L^2})\oplus ker(D_{\Sigma})]\cap \mathcal{E}xp^{+}\cong ker(\mathfrak{p}^{-,0})/ker(D^-|_{L^2_1}).
\end{align*}
So we have
\begin{align*}
index(\mathcal{T}_{d^{\pm}}\circ B)&=index(\mathcal{T}_{d^{\pm}}|_{Exp^-}\circ p^-)=index(\mathcal{T}_{d^{\pm}}|_{Exp^-})+index(p^-)\\
                         &=index(\mathcal{T}_{d^{\pm}}|_{Exp^-})+ker(p^-)-coker(p^-)\\
                         &=-\frac{1}{2}dim(ker(D_{\Sigma}))+ker(p^-)-ker(\mathfrak{p}^{-,0})+dim(ker(D^-|_{L^2_1}))\\
                         &=-\frac{1}{2}dim(ker(D_{\Sigma}))+\int \hat{A}(M)+\frac{1}{2}dim(ker(D_{\Sigma}))+dim(ker(D^-|_{L^2_1}))\\
                         &=\int \hat{A}(M)+dim(ker(D^-|_{L^2_1})).
\end{align*}
Therefore we prove Proposition 4.6.

\section{Appendix: Proof of Proposition 2.5}
Here we prove the fact that $coker(\mathfrak{L}_p)$ is isomorphic to $coker(\mathcal{T}_{d^{\pm}}\circ B)\oplus ker(D|_{L^2_1})$.

Firstly, we recall that $L^2(M-\Sigma;\mathcal{S}_{g,\Sigma})=range(D|_{L^2_1})\oplus ker(D|_{L^2})$ by Proposition 2.1. Moreover, we have $ker(D|_{L^2})\simeq B(ker(D|_{L^2}))\oplus ker(D|_{L^2_1})$ because any $L^2$-harmonic spinors can be determined by its leading terms and an element in $ker(D|_{L^2_1})$. By definition (2.1) and the fact that $\phi_0\in L^2_1$, we have $coker(\mathfrak{L}_p)=range(\mathfrak{L}_p)^{\perp}\subset ker(D|_{L^2})\simeq B(ker(D|_{L^2}))\oplus ker(D|_{L^2_1})$. So any $u\in coker(\mathfrak{L}_p)$ can be written as a pair $(B(u), v)\in B(ker(D|_{L^2}))\oplus ker(D|_{L^2_1})$. Our goal is to define a 1-1 correspondence mapping $B(u)$ to an element in $coker(\mathcal{T}_{d^{\pm}}\circ B)$.\\

For any $u\in coker(\mathfrak{L}_p)$, we can write $(u^+,u^-)=B(u)$ and derive the following equality
\begin{align}
0= Re\int_{M-\Sigma}\langle u,  \mathfrak{L}_p(\eta,\phi_0) \rangle=Re\int_{S^1} \bar{d^-}\eta u^+-\bar{d}^+\bar{\eta}u^-
\end{align} 
by integration by parts. This equality is true for all $C^1$-maps $\eta:S^1\rightarrow \mathbb{C}$. So we can conclude that
\begin{align*}
d^-\bar{u}^+=\bar{d}^+u^-.
\end{align*}
We define the following $c$ to be the corresponding element in $coker(\mathcal{T}_{d^{\pm}\circ B})$: 
\begin{align*}
c=\frac{\bar{u}^+}{\bar{d}^+}=\frac{u^-}{d^-}.
\end{align*}
$c$ is well-defined because by the definition of $\mathfrak{M}$, we have $\frac{|\psi|(p)}{\mbox{dist}(p,\Sigma)^{\frac{1}{2}}}> 0$ which implies $|d^+|^2+|d^-|^2\neq 0$.\\

Now, we claim that the map $\mathcal{J} :u \rightarrow (c,v)$ is a bijection from $coker(\mathfrak{L}_p)$ to $coker(\mathcal{T}_{d^{\pm}}\circ B)\oplus ker(D|_{L^2_1})$.\\

Before proving this claim, we also have to show that $\mathcal{J}$ is well-defined. In the other words, we have to check that $c$ is in $coker(\mathcal{T}_{d^{\pm}}\circ B)$. To prove this condition, we have to regard $L^2(S^1;\mathbb{C})$ as a real vector space and use the inner product
\begin{align*}
( f,g ) :=Re \int_{S^1} f\bar{g}.
\end{align*}
By using this inner product, for any $(w^+,w^-)=B(w)\in B(ker(D|_{L^2}))$, we have
\begin{align}
(\mathcal{T}_{d^{\pm}}\circ B(w),c)&=Re\int_{S^1}\bar{u}^-w^+-u^+\bar{w}^-=Re\int_{S^1}\bar{u}^-w^+-\bar{u}^+w^-\\
&= Re(\int_{M-\Sigma}\langle w,Du \rangle+\langle Dw,u \rangle)=0.\nonumber
\end{align}
So $c$ is in $coker(\mathcal{T}_{d^{\pm}}\circ B)$.\\

The injectivity of $\mathcal{J}$ is easy to check. So here we only show that $\mathcal{J}$ is surjective. To prove this part, we choose $c'\in coker(\mathcal{T}_{d^{\pm}}\circ B)$ and define $(u'^+,u'^-):=(d^+\bar{c}',d^-c')$. Then $(d^+\bar{c}',d^-c')$ will be perpendicular to $e_0 B(ker(D|_{L^2}))$. By using Proposition 3.2, we have $(d^+\bar{c}',d^-c')\in B(ker(D|_{L^2}))$. So we have $\mathcal{J}$ is surjective.\\

\noindent {\bf Acknowledgement:} The main idea in the first part of this paper was formed during the time that the author had visited Universitat Bielefeld, Germany. The author wants to thank Andriy Haydys, Stefan Bauer and Zvonimir Sviben for their warm hospitality and discussion. He also wants to thank Aleksander Doan, Simon Donaldson, Yi-Jen Lee and especially Clifford Taubes,  for their encouragement. Finally, he wants to thank an anonymous referee who spent a lot of time to make this paper better.\\ 

\bibliographystyle{amsplain}

\end{document}